\DeclareSymbolFont{AMSb}{U}{msb}{m}{n}
   \renewcommand\@biblabel[1]{#1.}
      \numberwithin{equation}{section}
\definecolor{antiquewhite}{rgb}{0.98, 0.92, 0.84}
\definecolor{buff}{rgb}{0.94, 0.86, 0.51}
\definecolor{palecopper}{rgb}{0.85, 0.54, 0.4}
\definecolor{fluorescentyellow}{rgb}{0.8, 1.0, 0.0}
\definecolor{bole}{rgb}{0.47, 0.27, 0.23}
\definecolor{cornellred}{rgb}{0.7, 0.11, 0.11}
\definecolor{britishracinggreen}{rgb}{0.0, 0.26, 0.15}
\definecolor{cobalt}{rgb}{0.0, 0.28, 0.67}
\DeclareSymbolFont{usualmathcal}{OMS}{cmsy}{m}{n}
\DeclareSymbolFontAlphabet{\mathcal}{usualmathcal}
\newcommand{\BA}{{\mathbb{A}}}
\newcommand{\BC}{{\mathbb{C}}}
\newcommand{\BR}{{\mathbb{R}}}
\newcommand{\conv}{\mathrm{conv}}
\DeclareMathOperator{\Hilb}{Hilb}
\DeclareMathOperator{\colength}{colength}
\DeclareFontFamily{OT1}{rsfs}{}
\DeclareFontShape{OT1}{rsfs}{n}{it}{<-> rsfs10}{}
\DeclareMathAlphabet{\curly}{OT1}{rsfs}{n}{it}
\renewcommand\hom{\mathscr{H}\kern-0.3em\mathit{om}}
\newcommand\Hom{\operatorname{Hom}}
\DeclareMathOperator{\lHom}{\mathscr{H}\kern-0.3em\mathit{om}}
\DeclareMathOperator{\RRlHom}{\mathbf{R}\kern-0.025em\mathscr{H}\kern-0.3em\mathit{om}}
\DeclareMathOperator{\lExt}{{\mathscr{E}\kern-0.2em\mathit{xt}}}
\tikzset{commutative diagrams/arrow style=math font}
\tikzset{commutative diagrams/.cd,
mysymbol/.style={start anchor=center,end anchor=center,draw=none}}
\tikzset{
shift up/.style={
to path={([yshift=#1]\tikztostart.east) -- ([yshift=#1]\tikztotarget.west) \tikztonodes}
}
}
\theoremstyle{definition}
\newtheorem*{lemma*}{Lemma}
\newtheorem*{theorem*}{Theorem}
\newtheorem*{example*}{Example}
\newtheorem*{fact*}{Fact}
\newtheorem*{notation*}{Notation}
\newtheorem*{definition*}{Definition}
\newtheorem*{prop*}{Proposition}
\newtheorem*{remark*}{Remark}
\newtheorem*{corollary*}{Corollary}
\newtheorem*{conventions*}{Conventions}
\newtheorem{definition}{Definition}[section]
\newtheorem{example}[definition]{Example}
\newtheorem{question}[definition]{Question}
\newtheorem{notation}[definition]{Notation}
\newtheorem{remark}[definition]{Remark}
\newtheoremstyle{thm} 
        {3mm}
        {3mm}
        {\slshape}
        {0mm}
        {\bfseries}
        {.}
        {1mm}
        {}
\theoremstyle{thm}
\newtheorem{corollary}[definition]{Corollary}
\newtheorem{lemma}[definition]{Lemma}
\newtheorem{conj}{Conjecture}
\newtheoremstyle{ex} 
        {3mm}
        {3mm}
        {}
        {0mm}
        {\scshape}
        {.}
        {1mm}
        {}
\theoremstyle{ex}
\newtheoremstyle{sol} 
        {3mm}
        {3mm}
        {}
        {0mm}
        {\scshape}
        {.}
        {1mm}
        {}
\theoremstyle{sol}
\newenvironment{Tableau}[1]{%
  \tikzpicture[scale=0.5,draw/.append style={thick,black},
                      baseline=(current bounding box.center)]
    \tableauRow=-1.5
    \foreach \Row in {#1} {
       \tableauCol=0.5
       \foreach\k in \Row {
         \draw[thin](\the\tableauCol,\the\tableauRow)rectangle++(1,1);
         \draw[thin](\the\tableauCol,\the\tableauRow)+(0.5,0.5)node{$\k$};
         \global\advance\tableauCol by 1
       }
       \global\advance\tableauRow by -1
    }
}{\endtikzpicture}
\newtheorem*{Acknowledgments*}{Acknowledgments}
\DeclareMathAlphabet\BCal{OMS}{cmsy}{b}{n}
\address{ETH Z\"urich, R\"amistrasse 101,
8092 Z\"urich, Switzerland}
\address{Centre for Mathematical Sciences, University of Cambridge, Wilberforce Road, CB3 0WA, Cambridge, United Kingdom}
\email{fr414@cam.ac.uk}
\title[Conjectural criteria for  the most singular points of the Hilbert schemes of points
]{Conjectural criteria for  the most singular points of the Hilbert schemes of points
}
\author{Fatemeh Rezaee}
\date{}
\begin{document}
\maketitle

\begin{abstract}
 We provide conjectural necessary and (separately) sufficient conditions for the Hilbert scheme of points of a given length to have the maximum dimension tangent space at a point. The sufficient condition is claimed for 3D 
and reduces the original problem to a problem in convex geometry. Proving either of the two conjectural statements will in particular resolve a long-standing conjecture by Briançon and Iarrobino back in the '70s for the case of the powers of the maximal ideal. Furthermore, for specific classes of lengths, we conjecturally classify points satisfying the conjectural sufficient conditions. This in particular (conjecturally) provides many new explicit families of examples of maximum dimension tangent space at a point of the Hilbert schemes of points of lengths strictly between two consecutive tetrahedral numbers ${3+k \choose 3}$.

\end{abstract}

{\hypersetup{linkcolor=black}
\tableofcontents}

\section{Overview} 
The geometric behavior of Hilbert schemes is very much unpredictable and subtle, as Vakil's Murphy's law hold for them (see \cite{Vakil06} for positive dimensional subschemes parametrized by the Hilbert scheme, and \cite{Jelisiejew20} for zero-dimensional ones). As a consequence, there are also several long-standing open conjectures on the geometry of Hilbert schemes.

In this article, we want to understand the singularities of the Hilbert scheme of points via understanding the tangent space. We use the shape of the convex hull of an element in the Hilbert scheme as an optimal singularity detector. Based on this, we suggest conjectural sufficient conditions for having maximal singularity. In particular, in 3D, we (conjecturally) explicitly describe the most singular points of a class of specific given lengths by manipulating the powers of the maximal ideal. Separately, we suggest a necessary condition for having maximum dimension tangent space at a point.

\subsection*{Why do we care about purely conjectural statements?} We emphasize that the conjectures provide a broad extension in a general format of the (counter-)examples provided in \cite{Sturmfels} and \cite{Ramkumar-Sammartano}, where Sturmfels, respectively Ramkumar and Sammartano provided new examples of points of respective lengths $8$ and $39$ in three dimensions with maximum dimension tangent spaces, which contradict relevant conjectures in \cite{Bri-Iar} and \cite{Sturmfels}, respectively. Furthermore, our general format involves convex geometric interpretation of the most singular points using computer algebra-based data (via Macaulay2) and via a mathematical pattern recognition procedure aiming to unify the properties of such points. This new interpretation will help to understand the nature of optimal singularities of the Hilbert schemes better and have a new perspective towards solving long-standing open problems regarding the tangent space and singularities.

\subsection*{Plan}In Section \ref{sec: statement of main conjectures}, first we present conjectural sufficient and necessary conditions in a general format (Section \ref{sec: main}), and in Section \ref{sec: 3Dpartial classification}, we conjecturally provide a partial classification of explicit ideals in 3D satisfying the sufficient conditions conjecture.

In Section \ref{sec: examples}, we provide several explicit examples as evidence for the conjectures in dimension three. In particular, we start with a table in dimension $3$ to present how often our conjectural types  occur for up to length $40$.

\subsection*{Previous work}In \cite[Section 2.6]{Jelisiejew23Open}, some open problems related to 
 the tangent space to the Hilbert schemes of points are discussed. Here, we list some of the relevant references: In \cite{Bri-Iar,Sturmfels,Ramkumar-Sammartano} the maximum dimension tangent space is considered which will be briefly discussed later. Also, see \cite{MNOP1,PandharipandeSlides,Ramkumar-Sammartano1,GGGL23,RicolfiSign, PT, JKSCounterBehrend} for problems motivated by enumerative geometry, including counter-examples for the parity conjectures for the tangent space in \cite{GGGL23} and for the constancy of the Behrend function in \cite{JKSCounterBehrend}. Some other singularity related problems can be found in \cite{HuX,Jelisiejew20, KatzS, StevensJ, Ranestad-Schreyer}.

\subsection*{Notation} We summarize the notations which will be defined in the next section:
\begin{center}
   \begin{tabular}{ r l }
     $\conv(P)$ &  The convex hull of a compact set $P \subset \BR^N$.\\
      $\conv(I)$ &  The convex hull of \text{$P=$ the set of all of the exponents of}\\&\text{the monomials of a monomial ideal $I$ in $N$ variables}.\\
      $\overline{\partial} \conv(I)$& The upper boundary of the convex hull.\\
    
        $\underline{\partial} \conv(I)$& The lower boundary of the convex hull.\\
         $\colength(I)$& $= \mathrm{hom}(\BC[x_1,x_2,\ldots,x_N],\frac{\BC[x_1,x_2,\ldots,x_N]}{I})$.\\

      \end{tabular}
      \end{center}
      \begin{center}
         \begin{tabular}{ r l }
           $T(I)$& $= \mathrm{hom}(I,\frac{\BC[x_1,x_2,\ldots,x_N]}{I})$, the dimension of the  tangent space to the\\& the Hilbert scheme at the ideal $I$.\\

         $\mathfrak{m}$& $=(x_1,x_2,\ldots,x_N)$, the maximal ideal of  $\BC[x_1,x_2,\ldots,x_N]$.

                      \end{tabular}
     \end{center}
   
   \subsection*{Terminology} By a \textit{mixed monomial generator} for a monomial ideal $I$ in $\BC[x_1,\ldots,x_N]$, we mean a monomial term containing the powers of at least two different $x_i$. Also, when we say a \text{zero dimensional ideal} $I$ in $R$, it means that $\frac{R}{I}$ is a zero dimensional algebra. By \textit{a point (respectively, an ideal) has the maximum dimension tangent space}, we mean the Hilbert scheme has the maximum dimension tangent space at that point (respectively, the subschem defined by the ideal).

\subsection*{Acknowledgements} 
I would like to thank Joachim Jelisiejew for encouraging me to write down and publish the conjectures, comments on preliminary versions, and several enlightening discussions. I would like to thank Tony Iarrobino for several helpful comments and suggestions on various versions. I benefited from  useful comments and helpful discussions with Cristina Bertone, David Eisenbud, Rahul Pandharipande, 
Andrea Ricolfi, Alessio Sammartano, Hal Schenck, Ravi Vakil, and Volkmar Welker. This work was partially supported by the NSF Grant No. DMS-1928930, while the author was in residence at  SLMath (formerly known as MSRI) in Berkeley, California, during the Fall
2022 semester. The author was supported by UKRI grant No. EP/X032779/1, and was resident at ETH Z\"urich, during the final stages of writing. The general rough ideas were initiated when the author was partially supported by EPSRC grants EP/T015896/1 and EP/W522648/1.
We acknowledge using GeoGebra \cite{geogebra5} and Macaulay2 \cite{GS}.     
\section{Statements of the conjectures} \label{sec: statement of main conjectures}We start with stating the general conjectures in Section \ref{sec: main}, followed by the statement of the conjecture by Briançon-Iarrobino in Section \ref{sec: application}. Then in Section \ref{sec: tangentFat}, we prove a formula for the dimension of the tangent space of specific fat point including the powers of the maximal ideal, as the conjectural formulas for the dimension will be given in terms of these dimensions. In Section \ref{sec: 3Dpartial classification}, we restrict our attention to $N=3$, and explicitly describe the ideals of specific colengths with maximum dimension tangent space.
\subsection{Main Conjectures} \label{sec: main} In this subsection we give the statements of our main conjectures, namely conjecture \ref{combinatorialCrit} and Conjecture \ref{necessaryCondition}. 
\subsection*{Convention} In what follows, we consider ideals with minimal generators. 

\begin{definition}\cite[Theorem 3.15]{Gärtner-Hoffmann}
 For a compact set $P \subset \BR^N$, we can characterize \textit{the convex hull of $P$, $\conv(P)$}, as the smallest (with respect to set inclusion) convex subset of $\BR^N$ that contains $P$.
\end{definition}

\begin{notation}
For a monomial ideal $I$ in $N$ variables, if $P$ is the set of all of the exponents of the monomials of $I$, then we denote the convex hull of $P$, by $\conv(I)$.
\end{notation}

\begin{definition} \label{Definition:upperLowerBoundary} 
For a compact set $P \subset (\BR^{\geq0})^N$, we define the \textit{lower boundary of $\conv(P)$}, (denoted by $\underline{\partial} \conv(P)$) to be the facets of the convex hull which are visible from the origin of $\BR^N$, $(\underbrace{0,0,\ldots, 0}_{\text{$N$ times}})$. Similarly, we define the \textit{upper boundary of $\conv(P)$}, (denoted by $\overline{\partial} \conv(P)$), to be the facets of the convex hull which are visible from $(\underbrace{+\infty,+\infty,\ldots, +\infty}_{\text{$N$ times}})$.
\end{definition}
\begin{remark}
    Let $P$ be the set of all the exponents of the monomials of a monomial ideal $I$ in $\BC[x_1,\ldots,x_N]$. Then $P \subset (\BR^{\geq0})^N$, and so we can apply Definition \ref{Definition:upperLowerBoundary} in this case, which is the case for the rest of this article. In such a case, we denote the lower and the upper boundary of the convex hull by $\underline{\partial} \conv(I)$ and $\overline{\partial} \conv(I)$, respectively.
\end{remark}

Although the claim of the following conjecture will be for the case of $N=3$, we begin with arbitrary $N$ format, and then in Question \ref{question}, we ask if the conjecture holds for arbitrary $N$:

\begin{conj} \label{combinatorialCrit}
Let $N\geq 3$. Suppose that $I$ is a 0-dimensional Borel-fixed ideal in $\BC[x_1,x_2,\ldots,x_N]$, which  admits a convex hull, $\conv(I)$, spanned by all of its monomial generators. Suppose that there are positive integers $m_1\leq m_2\leq\ldots\leq m_N$ such that $$I=(x_1^{m_1},x_2^{m_2}, \ldots,x_N^{m_N},\text{all the mixed monomial generators}),$$ where
\begin{align}\label{necessary}
   \text{if ${N-1+k \choose N}\leq\mathrm{colength}(I)<{N+k\choose N}$, then $m_1=k$},
\end{align}
and either
\begin{itemize}
    \item[I]\label{I}

\begin{enumerate}
\item[($a$)] $m_i$'s satisfy the following conditions
\begin{itemize}
    \item[(i)] at least $N-1$ of the $m_i$'s are equal,
    \item[(ii)] if $m_1=m_2=\ldots =m_{N-1}$, then $ m_N \leq m_{N-1}+2$,
    \item[(iii)] if $m_2=m_3=\ldots=m_N$, then $m_1 \geq m_2-1$,
\end{itemize}
\item[($b$)] $\overline{\partial}\conv(I)$ is the $(N-1)$-simplex spanned by  $m_1, m_2, \ldots, m_N$,
    \item[($c$)]  $\underline{\partial}\conv(I)$ has the maximal number of lattice points among the monomial ideals with minimal generators generated by the monomials/points below (or lying on, in case the convex hull is $(N-1)$ dimensional) the $(N-1)$-simplex spanned by $m_1, m_2, \ldots, m_N$,
    \item[($d$)] $\underline{\partial}\conv(I)$ is symmetric with respect to at least one of the coordinate axes,
    \item[($e$)] All the monomials in $I$ are contained in  
  $\underline{\partial}\conv(I)$, and there is no missing lattice point on the lower boundary.
\end{enumerate}

or 
\item[II]
\begin{enumerate}
\item[($a^\prime$)] $m_i$'s satisfy the following condition
\begin{itemize}

    \item[(i)]  $m_2=\ldots =m_{N-1}=m_{N}=k+1$, and $ m_1=k$,
 
\end{itemize}
\item[($b^\prime$)] $\underline{\partial}\conv(I)$ is the $(N-1)$-simplex spanned by  $m_1, m_2, \ldots, m_N$,
    \item[($c^\prime$)] $\overline{\partial}\conv(I)$ has the maximal number of lattice points among the monomial ideals with minimal generators generated by the monomials/points above (or lying on) the $(N-1)$-simplex spanned by $m_1, m_2, \ldots, m_N$,
    \item[($d^\prime$)] $\overline{\partial}\conv(I)$ is symmetric with respect to at least one of the coordinate axes,
    \item[($e^\prime$)] All the monomials in $I$ are contained in 
  $\overline{\partial}\conv(I)$, and there is no missing lattice point on the upper boundary.
\end{enumerate}

or
\item[III]
\begin{enumerate}
\item[($a^{\prime\prime}$)] $m_i$'s satisfy the following conditions

\begin{itemize}
    \item[(i)] either $m_1=m_2=\ldots =m_{N-1}=k$, and $ m_N=k+1$, or
    \item[(ii)]  $m_2=\ldots =m_{N-1}=m_{N}=k+1$, and $ m_1=k$,
\end{itemize}
\item[($b^{\prime\prime}$)] $\overline{\partial}\conv(I)$ is strictly above, and $\underline{\partial}\conv(I)$ is strictly below the $(N-1)$-simplex spanned by  $m_1, m_2, \ldots, m_N$ (the convex hull may intersect the $(N-1)$-simplex spanned by  $m_1, m_2, \ldots, m_N$ only at faces),
    \item[($c^{\prime\prime}$)] The lower and upper boundary of $\conv(I)$ together have the maximal number of lattice points among the monomial ideals with minimal generators generated by the monomials/points either above and below the $(N-1)$-simplex spanned by $m_1, m_2, \ldots, m_N$,
    \item[($d^{\prime\prime}$)] The convex hull $\conv(I)$ is symmetric with respect to at least one of the coordinate axes,
    \item[($e^{\prime\prime}$)] All the monomials in $I$ are contained either on the lower or upper boundary of 
  $\conv(I)$ (and not inside the convex hull), and there is no missing lattice point in the upper or lower boundary.
\end{enumerate}
\end{itemize}

  Then, for $N=3$, the monomial ideal $I$  has the maximum dimension tangent space among all the elements in the Hilbert scheme $\Hilb^{\colength(I)}(\BA^N)$.
\end{conj}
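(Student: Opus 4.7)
The plan is to proceed in three stages, reducing the conjecture for $N=3$ to a purely combinatorial maximization over Borel-fixed monomial ideals.

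First, reduce to Borel-fixed monomial ideals. For any $J \in \Hilb^{n}(\BA^3)$, the generic initial ideal $\mathrm{gin}(J)$ with respect to reverse-lex and a generic change of coordinates is Borel-fixed monomial, lies in $\Hilb^{\colength(J)}(\BA^3)$, and is connected to $J$ by a Gröbner degeneration; upper semicontinuity of tangent dimension gives $T(\mathrm{gin}(J))\geq T(J)$. Hence the global maximum of $T$ over the Hilbert scheme is attained at some Borel-fixed monomial ideal, and it suffices to verify the inequality only against such competitors of the prescribed colength.

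Second, express $T(J)$ for a Borel-fixed monomial ideal $J$ via the $T$-equivariant arrow count
\[
T(J)=\#\bigl\{\text{equivariant arrows }g\rightsquigarrow m\,:\,g\in G(J),\ m\notin J\bigr\},
\]
and regroup arrows according to whether the source generator $g$ lies on $\underline{\partial}\conv(J)$, on $\overline{\partial}\conv(J)$, or strictly between them. For ideals of the forms (I)--(III) the minimal generators sit on a single boundary, or are partitioned across both boundaries in case (III), so each arrow contribution can be read off directly from the boundary shape. Combining this with the fat-point tangent formula established in Section~\ref{sec: tangentFat} yields closed expressions for $T(I)$ purely in terms of $(m_1,m_2,m_3)$ and the lattice-point counts on the dominant boundary.

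The substantial step is the third: show $T(J)\leq T(I)$ for every Borel-fixed monomial $J$ with $\colength(J)=\colength(I)$, with equality only in the stated loci. The constraint \eqref{necessary} pins down $m_1=k$ whenever $\binom{2+k}{3}\leq \colength(J)<\binom{3+k}{3}$; working inside this slab, argue by a sequence of local combinatorial moves on Borel staircases: (a) any generator strictly between the two boundaries of $\conv(J)$ can be pushed onto a boundary while weakly increasing $T$; (b) a non-symmetric staircase can be symmetrized via a coordinate reflection without decreasing $T$; and (c) a configuration missing a lattice point on the dominant boundary admits an arrow-preserving modification that adds such a point, strictly increasing $T$ unless the maximum is already realized. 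Iterating reduces every competitor to one of the forms (I), (II), or (III); the trichotomy itself reflects the changing balance between upper and lower boundary contributions as $\colength(I)$ varies between consecutive tetrahedral numbers $\binom{2+k}{3}$ and $\binom{3+k}{3}$.

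The main obstacle lies entirely in the third stage. The ingredients of the first two stages --- semicontinuity, gin, the equivariant arrow formula, and the fat-point formula --- are essentially standard or already established in the paper. What is genuinely hard is the convex-geometric case analysis verifying that the three local moves strictly increase $T$ off the claimed loci. The most delicate regime is near the tetrahedral numbers, where the bound $m_N\leq m_{N-1}+2$ in condition (ii) becomes sharp and the transition between (I), (II) and (III) occurs: any proof must identify the correct invariant --- presumably a weighted boundary lattice-point count driven by the fat-point formula --- to induct on, and check that it strictly dominates on every non-candidate Borel staircase, including the examples near the transitions tabulated in Section~\ref{sec: examples}.
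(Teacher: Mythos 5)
The statement you are addressing is a conjecture: the paper gives no proof of Conjecture \ref{combinatorialCrit}, only computational evidence (Macaulay2-verified examples up to colength $40$ in Section \ref{sec: examples}) together with the fat-point computation of Lemma \ref{lem: tangentspacefatpoints}. So there is no argument in the paper to compare yours against, and your text is likewise not a proof but a programme. Your first two stages are reasonable and essentially standard: passing to the generic initial ideal and using upper semicontinuity of $\dim T$ along the Gr\"obner degeneration does reduce the problem to Borel-fixed monomial competitors, and the tangent space at a monomial ideal does decompose into torus weight spaces. Even there, though, your ``equivariant arrow count'' is stated too loosely: a homomorphism $I\to \BC[x,y,z]/I$ is determined by the images of the minimal generators \emph{subject to the syzygies}, so the naive count of weight-compatible arrows from generators to standard monomials is in general only an upper bound per weight, not an equality; in dimension $3$ the syzygy corrections do not vanish and would have to be tracked explicitly.

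The genuine gap is your third stage, which is the entire content of the conjecture and is asserted rather than proved. Moreover, the specific mechanism you propose is contradicted by the paper's own data. The colength-$12$ ideal $M$ of Section \ref{sec: sufficient not necessary} attains the maximal tangent dimension ($T(M)=66$) while violating the symmetry and boundary conditions of types I--III, so there can be no sequence of moves that \emph{strictly} increases $T$ off the claimed loci, and ``equality only in the stated loci'' is false. Likewise, the ideals of Conjectures \ref{*Conj} and \ref{**Conj} are maximizers not of forms I--III, and the table records colengths ($n=18,26,28,30$--$33$) at which no ideal of the conjectured shapes realizes the maximum at all, so the claim that every Borel staircase can be pushed onto one of the three forms while weakly increasing $T$ cannot be a uniform reduction. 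What would actually be needed --- a monotonicity statement identifying, for each colength in the slab $\binom{2+k}{3}\leq n<\binom{3+k}{3}$, an invariant of Borel staircases that is maximized exactly at (some) ideal satisfying the listed convex-geometric conditions, proved via syzygy-aware weight-space estimates --- is precisely what is missing, and nothing in your moves (a)--(c) supplies it.
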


\begin{question} \label{question}
    Does Conjecture \ref{combinatorialCrit} hold for arbitrary $N\geq 3$?
\end{question}
\begin{remark}
Obviously, the conjecture can be stated with respect to any reordering of the $m_i$'s as well.
\end{remark}

The following conjecture states that condition \ref{necessary} in Conjecture \ref{combinatorialCrit} is a necessary condition for a Borel-fixed ideal to have the maximum dimension tangent space.
\begin{conj}[Necessary condition]  \label{necessaryCondition}Let $N\geq 3$.  Suppose that $I$ is a 0-dimensional Borel-fixed ideal in $\BC[x_1,x_2,\ldots,x_N]$ which is  given by
   \begin{align*}
       I=(x_1^{m_1},x_2^{m_2}, \ldots,x_N^{m_N},\text{all the mixed monomial generators}),
   \end{align*}
   where  $m_1\leq m_2\leq\ldots\leq m_N$. 

   Then, if ${N-1+k \choose N}\leq\colength(I)<{N+k\choose N}$, and $T(I)$ is maximal, then  $m_1=k$.
  
\end{conj}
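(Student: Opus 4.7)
The plan is to prove the two inequalities $m_1 \le k$ and $m_1 \ge k$ separately. The first is short and follows from Borel-fixity together with the colength upper bound, while the second is the main obstacle and requires an explicit comparison with ideals for which $m_1 = k$.

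For the direction $m_1 \le k$, I would argue by contradiction: assume $m_1 \ge k+1$. A standard property of Borel-fixed monomial ideals is that whenever $m \in I$ and $x_j \mid m$ with $i < j$, the monomial $(x_i/x_j)\,m$ lies in $I$; iterating these replacements sends any monomial $m \in I$ of total degree $d$ to $x_1^d \in I$. Hence $m_1 \ge k+1$ forces $x_1^d \notin I$ for every $d \le k$, which in turn forces no monomial of degree at most $k$ to lie in $I$ at all. All $\binom{N+k}{N}$ such monomials then contribute independent classes in $\BC[x_1,\ldots,x_N]/I$, giving $\colength(I) \ge \binom{N+k}{N}$ and contradicting the stated upper bound.

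For the direction $m_1 \ge k$, the plan is to show that if $m_1 \le k-1$, then there is an explicit ideal $I^*$ of the same colength with $m_1(I^*) = k$ satisfying $T(I^*) > T(I)$, so $T(I)$ cannot be maximal. The first step is to use the formulas for tangent spaces of fat-point-type ideals developed in Section~\ref{sec: tangentFat} to produce an explicit lower bound for $T(I^*)$ when $I^*$ is built from $\Fm^k$ together with a controlled collection of mixed generators of degree $k$ chosen to match the target colength. The second step is a matching upper bound on $T(I)$ for any Borel-fixed $I$ with $m_1 \le k-1$, using the weight decomposition of $\Hom(I, \BC[x_1,\ldots,x_N]/I)$ together with the fact that $x_1^{m_1} \in I$ with $m_1 < k$ severely constrains the weights where nontrivial maps can occur.

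The principal obstacle will be this uniform upper bound: when $m_1 < k$ there is a large family of Borel-fixed ideals with many admissible mixed generators, and controlling $T(I)$ across that family is delicate. One natural route is an exchange argument producing $I'$ of the same colength with $m_1(I') = m_1(I) + 1$ and $T(I') \ge T(I)$, iterated until $m_1 = k$; the challenge is to preserve Borel-fixity and the colength range while proving the tangent-space inequality at each step. An alternative is a Gr\"obner degeneration of $I$ toward a fat-point-type ideal with $m_1 = k$ combined with upper semicontinuity of the tangent space; the difficulty there is arranging the degeneration so that the colength stratum is preserved and a strict jump in $T$ occurs somewhere along the path. Either route reduces the problem to a concrete combinatorial inequality tied to the counts in Section~\ref{sec: tangentFat}, which I expect to be the heart of the proof.
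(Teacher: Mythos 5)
This statement is one of the paper's \emph{conjectures}: the paper supplies no proof of it (only the much weaker Lemma~\ref{necessaryConditionMaximal}, which handles the boundary colength $\colength(I)={N-1+k\choose N}$ and only in the direction ``$m_1=k$ forces $I=\Fm^k$''), so there is no argument in the text to compare yours against. Judged on its own terms, your proposal establishes only the easy half. The inequality $m_1\le k$ is fine: Borel-fixity (with the convention matching $m_1\le\cdots\le m_N$, i.e.\ exchanges $x_j\mapsto x_i$ for $i<j$ preserve membership) sends any monomial of $I$ of degree $d$ to $x_1^d$, so $m_1\ge k+1$ would force every monomial of degree $\le k$ to be standard, giving $\colength(I)\ge{N+k\choose N}$, a contradiction. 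This part is correct and is in the same spirit as the paper's use of Borel-fixedness in Lemma~\ref{necessaryConditionMaximal}.

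The direction $m_1\ge k$, however, is the entire content of the conjecture, and your proposal does not prove it; it only names two candidate strategies and acknowledges that the key inequality is open. Both routes have concrete unaddressed obstructions. For the exchange argument, there is no mechanism offered for producing, from an arbitrary Borel-fixed $I$ with $m_1<k$, an ideal $I'$ of the \emph{same} colength with $m_1(I')=m_1(I)+1$ and $T(I')\ge T(I)$: raising the pure power $x_1^{m_1}$ changes the colength, any compensating removal of mixed generators can destroy Borel-fixity, and no monotonicity of $T$ under such moves is known (indeed the paper's examples, e.g.\ the colength~$16$ ideal in Section~\ref{sec: necessary not sufficient}, show $T$ behaves erratically under small combinatorial changes). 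For the degeneration route, upper semicontinuity gives $T$ at the special fibre at least $T$ at the general fibre, which by itself never yields the needed \emph{strict} gap $T(I^*)>T(I)$, and one would also have to keep the degeneration inside the fixed punctual Hilbert scheme and land on an ideal with $m_1=k$. Until one of these reductions is actually carried out, the proposal is a plan rather than a proof, and you should be aware that what remains is precisely the open problem the paper poses.
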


The following simple lemma shows that   for  the case of boundary colength, the necessary condition in Conjecture \ref{necessaryCondition} is sufficient as well.

\begin{lemma} \label{necessaryConditionMaximal}
Let $N\geq 3$.  Suppose that $I$ is a 0-dimensional Borel-ficed ideal in $\BC[x_1,x_2,\ldots,x_N]$ which is given by
   $$I=(x_1^{m_1},x_2^{m_2}, \ldots,x_N^{m_N},\text{all the mixed monomial generators}),$$
   where  $m_1\leq m_2\leq\ldots\leq m_N$. If $\colength(I)={N-1+k \choose N}$, then $I=\mathfrak{m}^k$ is the only ideal of this colength for which $m_1=k$.
\end{lemma}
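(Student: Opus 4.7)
The plan is to prove $I \subseteq \mathfrak{m}^k$; combined with the standard identity $\colength(\mathfrak{m}^k) = \binom{N-1+k}{N}$ (the quotient $\mathbb{C}[x_1,\ldots,x_N]/\mathfrak{m}^k$ has as a basis the monomials of degree $<k$), the assumed equality of colengths will immediately force $I=\mathfrak{m}^k$.

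The key step is to show that every minimal generator of $I$ has degree at least $k$. For the pure-power generators $x_i^{m_i}$, this is immediate from $m_i \geq m_1 = k$. For a mixed minimal generator $g = x_{i_1}^{a_1}\cdots x_{i_r}^{a_r}$ with $r \geq 2$, the idea is to exploit Borel-fixedness: the rule that $m \in I$ with $x_j \mid m$ implies $(x_i/x_j)\,m \in I$ for $i<j$ allows one to replace, one variable at a time and while remaining in $I$, each factor of $g$ by $x_1$; each swap preserves the total degree, so after $\deg g$ swaps one obtains $x_1^{\deg g} \in I$. Now the minimal generator of $I$ that divides $x_1^{\deg g}$ must itself be a pure power of $x_1$: the other pure generators $x_j^{m_j}$ involve the wrong variable, and any mixed generator has at least two variables in its support. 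Hence $x_1^{m_1} \mid x_1^{\deg g}$, yielding $\deg g \geq m_1 = k$.

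Once every generator of $I$ has degree $\geq k$, every monomial in $I$ is a multiple of such a generator and thus lies in $\mathfrak{m}^k$. This gives $I \subseteq \mathfrak{m}^k$, so $\colength(I) \geq \colength(\mathfrak{m}^k) = \binom{N-1+k}{N}$, and equality of colengths forces $I = \mathfrak{m}^k$. The only delicate point is the Borel cascade described above: one must iteratively verify that each intermediate monomial along the chosen sequence of swaps remains in $I$, but this is a direct consequence of the Borel-fixed hypothesis applied at each step, and this is the single place in the argument where Borel-fixedness is used in an essential way.
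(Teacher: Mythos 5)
Your proof is correct and follows essentially the same route as the paper: the paper likewise deduces from Borel-fixedness and $m_1=k$ (citing the exchange characterization in Eisenbud, Theorem 15.23(b), which you instead carry out explicitly as the cascade of swaps into $x_1$) that no generator of $I$ can have degree below $k$, and then uses the count $\colength(\mathfrak{m}^k)={N-1+k \choose N}$ of monomials of degree $<k$ to force $I=\mathfrak{m}^k$. The only cosmetic difference is that you phrase the final step as $I\subseteq\mathfrak{m}^k$ plus a colength comparison, while the paper phrases it as $I$ having to contain all degree-$k$ monomials.
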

\begin{proof}
   As we assume that $I$ is Borel fixed and $m_1=k$, by \cite[Theorem 15.23 (b)]{Eisenbudcomm}, we can deduce that all the generating monomials of $I$ are of degree $k$. 
   
   On the other hand, we assume $\colength(I)={N-1+k \choose N}$, which is the number of all the possible monomials in $N$ variables of degree less than $k$. As all the generating monomials of $I$ have degree $k$, this implies that $I$ consists of all the possible monomials of degree $k$ (otherwise the colength has to be greater than ${N-1+k \choose N}$), i.e., $I=\mathfrak{m}^k$. 
\end{proof}

\subsection{An application}\label{sec: application} If either Conjecture \ref{combinatorialCrit} or Conjecture \ref{necessaryCondition} holds, then in particular, the following well-known and long-standing conjecture will be held (recall that the sufficiency of Conjecture \ref{necessaryCondition} comes from Lemma \ref{necessaryConditionMaximal}) :
\begin{conj}\cite[Briançon-Iarrobino, 1978]{Bri-Iar} \label{1978Conj} The ideal
 $\mathfrak{m}^k=(x_1,x_2,\ldots,x_N)^k$ has the maximum dimension tangent space among all elements in $\Hilb^{{N-1+k \choose N}}(\BA^N)$.
\end{conj}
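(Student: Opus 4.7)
The plan is to derive Conjecture \ref{1978Conj} from Conjecture \ref{necessaryCondition} together with Lemma \ref{necessaryConditionMaximal}, by reducing the maximization of the tangent-space dimension over the entire Hilbert scheme to a maximization over the (finite) set of Borel-fixed monomial ideals of the prescribed colength $n=\binom{N-1+k}{N}$.

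The first step is a standard degeneration argument. For any point $[Z]\in\Hilb^{n}(\BA^N)$ with ideal $J\subset\BC[x_1,\dots,x_N]$, a generic linear change of coordinates followed by a one-parameter subgroup of the maximal torus produces, as flat limit, the generic initial ideal $\gin(J)$; by the theorem of Galligo/Bayer--Stillman this is a Borel-fixed monomial ideal of the same colength. Upper semi-continuity of the function $[Z]\mapsto T(I_Z)=\dim_{\BC}\Hom(I_Z,\BC[x_1,\dots,x_N]/I_Z)$ on flat families then yields $T(J)\leq T(\gin(J))$. Consequently, any maximizer of $T$ on $\Hilb^n(\BA^N)$ is dominated in tangent dimension by a Borel-fixed monomial ideal, so it suffices to locate the maximum among these.

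The second step applies the conjectural necessary condition. Let $I$ be a Borel-fixed $0$-dimensional monomial ideal of colength $n=\binom{N-1+k}{N}$ achieving the maximum of $T$, and write $I=(x_1^{m_1},\dots,x_N^{m_N},\text{mixed monomials})$ with $m_1\leq\dots\leq m_N$. Since the prescribed colength lies in the range $\binom{N-1+k}{N}\leq\colength(I)<\binom{N+k}{N}$, Conjecture \ref{necessaryCondition} forces $m_1=k$. Lemma \ref{necessaryConditionMaximal} now pins $I$ down uniquely: the only Borel-fixed ideal of this colength with $m_1=k$ is $I=\mathfrak{m}^k$. Combining with the first step, $\mathfrak{m}^k$ realizes the maximum tangent-space dimension across the entire Hilbert scheme, which is precisely Conjecture \ref{1978Conj}.

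The main obstacle is of course Conjecture \ref{necessaryCondition} itself, which is the deep input; granting it, the argument above is essentially formal. A mild technical point is to check carefully that the $\gin$-degeneration defines a \emph{flat} family of zero-dimensional subschemes (so that semi-continuity of $\dim T$ applies), which is classical but must be stated. An alternative route, also indicated in the paper, is to replace Conjecture \ref{necessaryCondition} by Conjecture \ref{combinatorialCrit}: here one would need to verify that $\mathfrak{m}^k$ (interpreted as the boundary case where the convex hull degenerates to the single $(N-1)$-simplex spanned by $(k,0,\dots,0),\dots,(0,\dots,0,k)$) satisfies the hypotheses of Conjecture \ref{combinatorialCrit} at colength $\binom{N-1+k}{N}$, after which the sufficient condition directly produces a maximizer and Lemma \ref{necessaryConditionMaximal} identifies it with $\mathfrak{m}^k$. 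Either route isolates exactly the same hard step: producing an \emph{unconditional} proof of the necessary or sufficient conjecture that is then specialized to the boundary colength.
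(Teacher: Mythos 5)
Your conditional derivation is essentially the paper's own route: Section \ref{sec: application} deduces Conjecture \ref{1978Conj} from Conjecture \ref{necessaryCondition} exactly because Lemma \ref{necessaryConditionMaximal} identifies $\mathfrak{m}^k$ as the unique Borel-fixed ideal of colength $\binom{N-1+k}{N}$ with $m_1=k$, and your alternative route through Conjecture \ref{combinatorialCrit} is the other implication the paper asserts. The only thing you add is making explicit the standard reduction (generic initial ideals plus upper semicontinuity of $T$) from arbitrary points of $\Hilb^n(\BA^N)$ to Borel-fixed monomial ideals, which the paper leaves implicit; bear in mind that the statement remains conjectural, since the paper gives no unconditional proof either.
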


\subsection{Tangent space for  $(F_1,\ldots, F_{N})^k$, with $F_i$  homogeneous} \label{sec: tangentFat} We need to compute $T(\mathfrak{m}^k)$, as in the next subsection, we express the conjectural maximum dimension tangent spaces in terms of $T(\mathfrak{m}^k)$ for $N=3$. In this subsection, we compute a more general version in Lemma \ref{lem: tangentspacefatpoints}, and the special case will be presented in Corollary \ref{cor: 3Dtangentspacefatpoints}.
\begin{lemma}
\label{lem: tangentspacefatpoints}
Fix $d\geq 1$, and set $I=(F_1,\ldots,F_{N})^k$ to be an ideal in $\BC[x_1,\ldots,x_N]$, where $F_i$ is a homogeneous polynomial of degree $g_i$ for $i=1,\ldots,N$. Then

\begin{align*}
    \colength(I)
    &=\left(\prod_{1\leq i\leq N}g_i\right) {k+N-1 \choose N},\\
    T(I)
    &=\left(\prod_{1\leq i\leq N}g_i\right)  {k+N-2 \choose N-1}{k+N-1 \choose N-1}\\
    &=N(\colength(I))\\&+ N{\left(\frac{N+1}{(k-1)(N-1)}\left({k+N-2 \choose N-2}-(N-1)\right)\right)}\left(\prod_{1\leq i\leq N}g_i\right)\left(\sum_{j=1}^{k-1}{j+N-1 \choose N}\right).
\end{align*}
\end{lemma}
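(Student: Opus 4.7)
The plan is to reduce the calculation to the model case $F_i = x_i$ — i.e., to the standard maximal ideal $\mathfrak{m}^k \subset \BC[x_1,\ldots,x_N]$ — via a finite flat base change, and then to evaluate $T(\mathfrak{m}^k)$ directly. Since $I$ is $0$-dimensional, so is $J := (F_1,\ldots,F_N)$, and with $N$ homogeneous elements in the $N$-variable polynomial ring $R := \BC[x_1,\ldots,x_N]$ this forces $F_1,\ldots,F_N$ to be a regular sequence. Consequently $R' := \BC[F_1,\ldots,F_N]$ is a polynomial subring in $N$ algebraically independent variables, and by Auslander--Buchsbaum (applied to $R$ as a depth-$N$ module over the regular ring $R'$), $R$ is a free $R'$-module of rank $\dim_\BC R/J = \prod_i g_i$. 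Writing $\mathfrak{m}'$ for the irrelevant ideal of $R'$, we have $I = (\mathfrak{m}')^k R$, and base change gives
\[
R/I \;\cong\; R \otimes_{R'} R'/(\mathfrak{m}')^k \;\cong\; \bigl(R'/(\mathfrak{m}')^k\bigr)^{\oplus \prod g_i}
\]
as $R'$-modules. Taking $\BC$-dimensions and using the hockey-stick identity $\sum_{j=0}^{k-1}\binom{j+N-1}{N-1} = \binom{k+N-1}{N}$ yields the stated colength formula.

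For the tangent space, the exact sequence $0 \to I \to R \to R/I \to 0$ combined with $\Hom_R(R, R/I) \cong R/I$, $\End_R(R/I) \cong R/I$, and $\Ext^1_R(R, -) = 0$ collapses the long exact sequence to $T(I) = \dim_\BC \Ext^1_R(R/I, R/I)$. Flat base change for Ext (applicable since $R$ is free, hence flat, over $R'$), applied to $R/I = R \otimes_{R'} R'/(\mathfrak{m}')^k$ and combined with the $R'$-module decomposition above, gives
\[
\Ext^1_R(R/I, R/I) \,\cong\, \Ext^1_{R'}\bigl(R'/(\mathfrak{m}')^k,\, R/I\bigr) \,\cong\, \Ext^1_{R'}\bigl(R'/(\mathfrak{m}')^k,\, R'/(\mathfrak{m}')^k\bigr)^{\oplus \prod g_i},
\]
so that $T(I) = \bigl(\prod g_i\bigr)\, T\bigl((\mathfrak{m}')^k\bigr)$. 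The problem is thereby reduced to the standard maximal ideal.

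To complete the computation one needs $T(\mathfrak{m}^k) = \binom{k+N-2}{N-1}\binom{k+N-1}{N-1}$. An element of $\Hom_R(\mathfrak{m}^k, R/\mathfrak{m}^k)$ is determined by its values on the $\binom{k+N-1}{N-1}$ degree-$k$ monomial generators, subject to the syzygies $x_i\,\phi(x^{\gamma-e_i}) = x_j\,\phi(x^{\gamma-e_j})$ for $|\gamma|=k+1$ and $\gamma_i,\gamma_j \geq 1$. Splitting by internal degree $-d$ (with $1 \leq d \leq k$) and solving the resulting graded linear systems delivers the product of binomial coefficients; alternatively, the result follows from the Eagon--Northcott minimal free resolution of $\mathfrak{m}^k$. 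The equivalence of the two displayed formulae for $T(I)$ is then a purely combinatorial identity, following from $N\binom{k+N-1}{N} = (k+N-1)\binom{k+N-2}{N-1}$ and the hockey-stick identity $\sum_{j=1}^{k-1}\binom{j+N-1}{N} = \binom{k+N-1}{N+1}$. The main obstacle is this last step — the per-degree counting for $\mathfrak{m}^k$; the base change reduction is otherwise formal.
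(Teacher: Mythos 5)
Your proposal is correct and follows essentially the same route as the paper's proof: both reduce to the model case $\mathfrak{m}^k$ by showing $R=\BC[x_1,\ldots,x_N]$ is a finite free module of rank $\prod_i g_i$ over the polynomial (sub)ring on $F_1,\ldots,F_N$ (the paper via the flatness/dimension criterion, you via Auslander--Buchsbaum), concluding $T(I)=\bigl(\prod_i g_i\bigr)\,T(\mathfrak{m}^k)$ --- your detour through $\Ext^1_R(R/I,R/I)$ and flat base change for Ext, versus the paper's direct tensor--Hom adjunction applied to $\Hom_R(I,R/I)$, is only a cosmetic difference. The one point where you are thinner than the paper is the evaluation $T(\mathfrak{m}^k)=\binom{k+N-2}{N-1}\binom{k+N-1}{N-1}$, which you assert with a sketch (graded syzygy counting or Eagon--Northcott), whereas the paper settles it by a short grading argument: the graded pieces of $\Hom(\mathfrak{m}^k,S/\mathfrak{m}^k)$ vanish except in degree $-1$, where one may send each of the $\binom{k+N-1}{N-1}$ minimal generators to an arbitrary form of degree $k-1$.
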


\begin{proof}

For $T(I)$, we argue as follows. Consider another polynomial ring $S=\BC[y_1,\ldots,y_N]$ and the morphism $\iota\colon S \to R=\BC[x_1,\ldots,x_N]$ sending $y_i\mapsto F_i$. Since $F_1,\ldots,F_N$ is a complete intersection, using the divisorial criterion for flatness (in the graded case), we see that the ring $R$ is a flat $S$-module. It is finitely generated and graded, so it is a free $S$-module. Then it has rank $g_1\cdots g_N$. This implies two things. First of all, the quotient
\[
\frac{R}{(F_1,\ldots,F_N)^k} = \frac{S}{(y_1,\ldots,y_N)^k} \otimes_S R
\]
is a free module of rank $g_1\cdots g_N$ over $S/(y_1,\ldots,y_N)^k$. Next, since $\iota$ is flat, we have $(F_1,\ldots,F_N)^k = R \otimes_S (y_1,\ldots,y_N)^k$ so that finally
\begin{align*}
\Hom_R\left((F_1,\ldots,F_N)^k, \frac{R}{(F_1,\ldots,F_N)^k}\right) &= \Hom_R\left(R\otimes_S (y_1,\ldots,y_N)^k, \frac{R}{(F_1,\ldots,F_N)^k}\right) \\ &= \Hom_S\left((y_1,\ldots,y_N)^k, \frac{R}{(F_1,\ldots,F_N)^k}\right) \\
&= \Hom_S\left((y_1,\ldots,y_N)^k, \left(\frac{S}{(y_1,\ldots,y_N)^k}\right)^{\oplus g_1\cdots g_N}\right) \\ 
&= \Hom_S\left((y_1,\ldots,y_N)^k, \frac{S}{(y_1,\ldots,y_N)^k}\right)^{\oplus g_1\cdots g_N}.
\end{align*}
In this way the computation is reduced to computing the tangent space to $(y_1,\ldots,y_N)^k$. Here, the tangent space is graded. It is easy to see that its positive part is zero and that its degree $-1$ part is given by sending any minimal generator of $(y_1,\ldots,y_N)^k$ to any form of degree $k-1$. The fixed part also vanishes, since monomial ideals are isolated torus fixed points. The degree $\leq -2$ part is zero as well. Thus 
\[
\dim_{\BC} \Hom_S\left((y_1,\ldots,y_N)^k, \frac{S}{(y_1,\ldots,y_N)^k}\right) = {k+N-2 \choose N-1}{k+N-1 \choose N-1},
\]
as required.

As for $\colength(I)$, with a similar argument implies the result (we consider  $\Hom_R\left(R, \frac{R}{(F_1,\ldots,F_N)^k}\right)$ instead of $\Hom_R\left((F_1,\ldots,F_N)^k, \frac{R}{(F_1,\ldots,F_N)^k}\right)$ in the previous argument). Then the second identification for $T(I)$ comes from replacing the colength and simplifying the statement.
\end{proof}

\begin{corollary}
\label{cor: 3Dtangentspacefatpoints}
For $\mathfrak{m}^k$
 in $\BC[x,y,z]$, we have

\begin{align*}
    \colength(\mathfrak{m}^k)
    &= {k+2 \choose 3}, \\
    T(\mathfrak{m}^k)
    &={k+1 \choose 2}{k+2 \choose 2}=3{k+2 \choose 3}+ 6\sum_{j=1}^{k-1}{j+2 \choose 3}.
\end{align*}

\end{corollary}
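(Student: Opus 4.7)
The plan is to derive both formulas as an immediate specialization of Lemma~\ref{lem: tangentspacefatpoints}. Taking $N=3$ and $F_i = x_i$ for $i=1,2,3$ gives $g_i = 1$ for all $i$, so that $(F_1,F_2,F_3)^k = \mathfrak{m}^k$ in $\BC[x,y,z]$. Substituting into the lemma's colength expression and into the first tangent-space expression yields
$$\colength(\mathfrak{m}^k) = \binom{k+2}{3}, \qquad T(\mathfrak{m}^k) = \binom{k+1}{2}\binom{k+2}{2}$$
directly, with no further work.

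For the second presentation of $T(\mathfrak{m}^k)$, I would plug the same values of $N$ and $g_i$ into the alternative expression from the lemma. The leading term $N\cdot\colength(I)$ becomes $3\binom{k+2}{3}$, while the coefficient of the sum simplifies as
$$N \cdot \frac{N+1}{(k-1)(N-1)}\left(\binom{k+N-2}{N-2} - (N-1)\right) = 3 \cdot \frac{4}{2(k-1)}\bigl((k+1) - 2\bigr) = 6,$$
so the identity $T(\mathfrak{m}^k) = 3\binom{k+2}{3} + 6\sum_{j=1}^{k-1}\binom{j+2}{3}$ follows at once.

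As a sanity check one can verify directly the equivalence of the two presentations using the hockey-stick identity $\sum_{j=1}^{k-1}\binom{j+2}{3} = \binom{k+2}{4}$, which reduces matters to the polynomial identity $\binom{k+1}{2}\binom{k+2}{2} = 3\binom{k+2}{3} + 6\binom{k+2}{4}$; factoring $\tfrac{k(k+1)(k+2)}{4}$ from both sides collapses this to the trivial equality $k+1 = 2 + (k-1)$. I do not anticipate any genuine obstacle: the corollary is a direct rewriting of Lemma~\ref{lem: tangentspacefatpoints} in the special case $N=3$ with linear forms, the factor $k-1$ in the denominator cancels cleanly, and the $k=1$ boundary is consistent since the empty sum matches $T(\mathfrak{m}) = \binom{2}{2}\binom{3}{2} = 3 = 3\binom{3}{3}$.
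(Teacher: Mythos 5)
Your proposal is correct and is exactly how the paper intends the corollary to be obtained: it is the specialization of Lemma~\ref{lem: tangentspacefatpoints} to $N=3$, $F_i=x_i$, $g_i=1$, with the coefficient $3\cdot\frac{4}{2(k-1)}\bigl((k+1)-2\bigr)=6$ simplifying just as you compute. Your hockey-stick sanity check and the $k=1$ boundary remark are harmless extras beyond what the paper records.
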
 
\subsection{Conjectural partial classification of ideals of types in Conjecture \ref{combinatorialCrit}, and closed formulas for the dimension of their tangent space in three dimensions}\label{sec: 3Dpartial classification}

When $N=3$, we have the following conjecture:

\begin{conj} \label{dim3}
    Let $k\geq 1$ be an integer. Then for any colength $n={k+2 \choose 3}+i$, for $i=0,1,2,3,4,k+1,2k+1,{k+3 \choose 3}-{k+2 \choose 3}-1$, provided that $n$ is strictly less than ${k+3 \choose 3}$, there exists at least one example of an ideal satisfying conditions in Conjecture \ref{combinatorialCrit} with the maximum dimension tangent space among all the elements in $\Hilb^n(\BA^3)$.
\end{conj}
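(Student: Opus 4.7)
The plan is to prove the conjecture by explicit construction. For each value
$i\in\{0,1,2,3,4,k+1,2k+1,\binom{k+3}{3}-\binom{k+2}{3}-1\}$ in the list, I will exhibit a specific Borel-fixed monomial ideal $I_i \subset \mathbb{C}[x,y,z]$ of colength $\binom{k+2}{3}+i$ which satisfies the hypotheses of Conjecture \ref{combinatorialCrit} in one of the three types (I), (II), (III) for $N=3$. Once such $I_i$ is produced, the maximality of its tangent space is (conditionally) a direct consequence of Conjecture \ref{combinatorialCrit}.

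The construction strategy proceeds case by case. For $i=0$, take $I_0=\mathfrak{m}^k$, which is Type I with $m_1=m_2=m_3=k$ by Lemma \ref{necessaryConditionMaximal}. For $i\in\{1,2,3,4\}$, enlarge the complement of $\mathfrak{m}^k$ by adding a few monomials along a single coordinate ray (say the $z$-axis), producing ideals of the form $(x^k, y^k, z^{k+j}, \text{mixed of degree }k)$ and small symmetric perturbations thereof; these candidates have $m_1=m_2=k$, $m_3=k+1$ and fall under Type III. For the middle cases $i=k+1$ and $i=2k+1$, remove an entire edge-worth of monomials from the generating set of $\mathfrak{m}^{k+1}$, placing the boundary on a triangular face through two coordinate rays; the natural symmetric candidates have $m_1=k$, $m_2=m_3=k+1$, falling under Type II (or Type III when the lower boundary is also shifted). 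For the largest case $i=\binom{k+3}{3}-\binom{k+2}{3}-1$, the ideal is $\mathfrak{m}^{k+1}$ with one corner monomial of degree $k+1$ removed from the generating set and compensated by a single degree-$(k+2)$ generator, giving a Type I example with $m_1=m_2=m_3=k+1$ (or a Type III variant).

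The verification for each $I_i$ decomposes into combinatorial checks: (i) $I_i$ is Borel-fixed and $0$-dimensional; (ii) the colength equals $\binom{k+2}{3}+i$, which is a direct count of lattice points in the complement; (iii) the exponent $(m_1,m_2,m_3)$ matches the prescribed triple for the appropriate type; (iv) the upper/lower boundary of $\conv(I_i)$ has the prescribed simplex shape; (v) the symmetry condition is satisfied — this will be automatic since each construction is invariant under swapping two of the three variables; and (vi) the relevant boundary carries the maximal possible number of lattice points among all admissible generator configurations inside (respectively outside) the bounding simplex.

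The main obstacle is step (vi). For each of the eight colength values we must argue that no competing configuration of generators yields more lattice points on the chosen boundary. My plan is to handle this by exploiting the fact that, modulo the Borel-fixed assumption, any admissible configuration corresponds to a saturated staircase inside a prescribed region, and to run a discrete extremal argument via a face-by-face count, using Pick's theorem on each triangular face of the bounding simplex together with the symmetric choice of mixed generators. A secondary but still nontrivial obstacle is showing that each of the intermediate colengths $k+1$ and $2k+1$ actually admits some completion falling under one of Types (I)--(III); it is not a priori obvious, and may force a careful choice between Type II and Type III depending on the parity or divisibility of $k$. A concrete instance of each case will be tabulated (in parallel to the examples of Section \ref{sec: examples}), which also provides a computer-verifiable sanity check that the constructed $I_i$ have the predicted tangent-space dimension from Corollary \ref{cor: 3Dtangentspacefatpoints} and its perturbations.
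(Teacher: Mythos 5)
The statement you are addressing is itself one of the paper's conjectures: the paper offers no proof, only the refinement Conjecture \ref{dim3Types} (an explicit candidate ideal for each listed $i$, with a predicted closed formula for $T(I)$ in terms of $T(\mathfrak{m}^k)$), supported by Macaulay2-verified examples for small $k$ in Section \ref{sec: examples}. Your plan has the same overall shape --- one explicit ideal per value of $i$ --- but it cannot close the gap, because the decisive step is outsourced to Conjecture \ref{combinatorialCrit}: you deduce maximality of the tangent space from the (unproven) sufficiency conjecture, so at best you obtain a statement conditional on Conjecture \ref{combinatorialCrit}, not Conjecture \ref{dim3} itself, whose content is precisely that the exhibited ideals attain the maximal tangent space dimension in $\Hilb^n(\BA^3)$. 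Even the paper treats this as conjectural; the only unconditional instances are low colengths checked by computer and the cases of Sturmfels ($n=8$) and Ramkumar--Sammartano ($n=39$). Your step (vi) (maximality of the number of boundary lattice points) is likewise only a sketch; a Pick-type face count does not by itself compare all admissible Borel-fixed staircases, and nothing in the proposal addresses the real difficulty, namely computing or bounding $T(J)$ for the competing ideals $J$ of the same colength.

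Beyond this logical gap, several of your candidate constructions disagree with the paper's list in Conjecture \ref{dim3Types}, and one is incorrect. For $i=1,2,4$ the paper's ideals $(x^k,y^k,z^{k+1},\ldots)$ and $(x^k,y^k,z^{k+2},\ldots)$ are of type I($a$)(ii), not type III (indeed for $i=2,4$ one has $m_3=k+2$, which type III does not allow), and $i=k+1$ is of type I($a$)(iii), not type II. Most seriously, for $i=\binom{k+3}{3}-\binom{k+2}{3}-1$ your proposal (delete a corner generator of $\mathfrak{m}^{k+1}$ and compensate by a degree-$(k+2)$ power) produces an ideal of colength $\binom{k+3}{3}+1$ rather than $\binom{k+3}{3}-1$, and it has $m_1=k+1$, violating condition \eqref{necessary} (equivalently Conjecture \ref{necessaryCondition}), so it cannot satisfy the hypotheses of Conjecture \ref{combinatorialCrit}; the paper's candidate is instead $(x^k,y^{k+1},z^{k+1},\text{all mixed monomial generators of }\mathfrak{m}^{k+1}\text{ minus }x^ky,\,x^kz)$, of type II. So even as a conditional argument, the case-by-case constructions would need to be redone along the lines of Conjecture \ref{dim3Types}.
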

Note that  $i=0$ is Conjecture \ref{1978Conj}. For other choices of $i$, we have the following conjecture. Note that from Corollary \ref{cor: 3Dtangentspacefatpoints}, we can replace $T(\mathfrak{m}^k)-3\colength(\mathfrak{m}^k)$ everywhere by $6\sum_{j=1}^{k-1}{j+2 \choose 3}$:
\begin{conj} \label{dim3Types}
    Let $n={k+2 \choose 3}+i$. For $i=1,2,3,4,k+1,2k+1,{k+3 \choose 3}-{k+2 \choose 3}-1$, we classify examples of colength $n$ as follows: \begin{itemize}
    \item[(1)] \label{1} Let $$I=(x^k,y^k,z^{k+1},\text{all the mixed monomial generators of $\mathfrak{m}^k$})$$
be an ideal in $\BC[x,y,z]$.  Then $\colength(I)={k+2 \choose 3}+1$, and for such an ideal all the conditions in Conjecture \ref{combinatorialCrit} (of type I($a$)(ii)) hold, and it has the maximum dimension tangent space. Furthermore $$T(I)=3\colength(I)+(T(\mathfrak{m}^k)-3\colength(\mathfrak{m}^k))=T(\mathfrak{m}^k)+3.$$
    \item[(2)] \label{2} Let $$I=(x^k,y^k,z^{k+2},\text{all the mixed monomial generators of $\mathfrak{m}^k$})$$
be an ideal in $\BC[x,y,z]$. Then $\colength(I)={k+2 \choose 3}+2$, and for such an ideal all the conditions in Conjecture \ref{combinatorialCrit} (of type I($a$)(ii)) hold, and it has the maximum dimension tangent space. Furthermore $$T(I)=3\colength(I)+(T(\mathfrak{m}^k)-3\colength(\mathfrak{m}^k))=T(\mathfrak{m}^k)+6.$$
    \item [(3)] \label{3} Let $k\geq 3$, and \begin{align*}
  I=&(x^k,y^k,z^{k+1}, \text{all the mixed monomial generators of $\mathfrak{m}^k$ with replacing $xz^{k-1}$ and}\\&\text{$yz^{k-1}$ by $xz^{k}$ and $yz^{k}$})      
    \end{align*} be an ideal in $\BC[x,y,z]$. Then $\colength (I)={k+2 \choose 3}+3$, and for such an ideal, all the conditions in Conjecture \ref{combinatorialCrit} (of type III($a^{\prime\prime}$)(i)) hold, and it has the maximum dimension tangent space. Furthermore
\begin{align*}
    T(I)=&3\colength(I)+(T(\mathfrak{m}^k)-3\colength(\mathfrak{m}^k)).
\end{align*}

    \item [(4)] \label{4}  Let \begin{align*}
     I=&(x^k,y^k,z^{k+2}, \text{all the mixed monomial generators of $\mathfrak{m}^k$ with replacing $xz^{k-1}$ and}\\&\text{ $yz^{k-1}$ by $xz^{k}$ and $yz^{k}$})   
    \end{align*} be an ideal in $\BC[x,y,z]$. Then $\colength (I)={k+2 \choose 3}+4$, and for such an ideal, all the conditions in Conjecture \ref{combinatorialCrit} (of type I($a$)(ii)) hold, and it has the maximum dimension tangent space. Furthermore
\begin{align*}
    T(I)=&3\colength(I)+(T(\mathfrak{m}^k)-3\colength(\mathfrak{m}^k))+6.
\end{align*}
    \item [(5)] \label{k+1} Let  \begin{align*}
I=&(x^k,y^{k+1},z^{k+1}, \text{all the mixed monomial generators of $\mathfrak{m}^k$ with replacing }\\&\text{$yz^{k-1},y^2z^{k-2}\ldots ,y^{k-1}z$ by $yz^{k},y^2z^{k-1}, \ldots,y^{k}z$})\end{align*} be an ideal in $\BC[x,y,z]$. Then $\colength (I)={k+2 \choose 3}+(k+1)$, and for such an ideal, all the conditions in Conjecture \ref{combinatorialCrit} (of type I($a$)(iii)) hold, and it has the maximum dimension tangent space. Furthermore
\begin{align*}
    T(I)=&3\colength(I)+(T(\mathfrak{m}^k)-3\colength(\mathfrak{m}^k))+k(k-1).
\end{align*}
    \item  [(6)] \label{2k+1} Let
\begin{align*}
    I=&(x^k,y^{k+1},z^{k+1},  \text{all the mixed monomial generators of degree $k$ containing $x^i$ with $i>1$,}\\&\quad \quad \quad \quad \quad \quad \quad\text{and monomials of degree $k+1$ containing $x^i$ for $0\leq i \leq 1$})
\end{align*}
be an ideal in $\BC[x,y,z]$. Then $\colength(I)={2+k\choose 3}+2k+1$, and for such an ideal, all the conditions in Conjecture \ref{combinatorialCrit} (of type III ($a^{\prime\prime}$)(ii)) hold, and it has the maximum dimension tangent space. Furthermore
\begin{align*}
T(I)=&3\colength(I)+(T(\mathfrak{m}^{k})-3\colength(\mathfrak{m}^{k}))+4{k \choose 2}+6.
\end{align*}
    \item [(7)] \label{-1} Let 
\begin{align*}
    I=&(x^k,y^{k+1},z^{k+1},  \text{all the mixed monomial generators of $\mathfrak{m}^{k+1}$ with removing $x^ky,x^kz$})
\end{align*}
be an ideal in $\BC[x,y,z]$. Then $\colength(I)={3+k\choose 3}-1$, and for such an ideal, all the conditions in Conjecture \ref{combinatorialCrit} (of type II) hold, and it has the maximum dimension tangent space. Furthermore
$$T(I)=3\colength(I)+(T(\mathfrak{m}^{k+1})-3\colength(\mathfrak{m}^{k+1}))-k(k+5).$$
\end{itemize}

\end{conj}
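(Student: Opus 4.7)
The plan splits each of the seven items (1)--(7) into three independent tasks: verifying the asserted colength, checking that the stated combinatorial conditions of Conjecture \ref{combinatorialCrit} hold, and computing $T(I)$ explicitly. The maximality of the tangent space then follows from Conjecture \ref{combinatorialCrit} itself, so modulo that conjecture the present statement reduces to a colength enumeration, a geometric inspection of the Newton polytope, and a graded tangent-space calculation.

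First I would handle the colength formulas. In each item $I$ is obtained from $\mathfrak{m}^k$ (or $\mathfrak{m}^{k+1}$ in (7)) by removing a small, explicit list of minimal generators and replacing some of them with higher-degree generators. Since $\colength(\mathfrak{m}^k)=\binom{k+2}{3}$ by Corollary \ref{cor: 3Dtangentspacefatpoints}, one enumerates the extra standard monomials introduced by the removals: $z^k$ in (1); $z^k,z^{k+1}$ in (2); three new monomials on coordinate lines in (3) and (4); $k+1$ monomials on a line in (5); $2k+1$ monomials on a triangular region in (6); and the complement of the two generators $x^ky, x^kz$ inside the standards of $\mathfrak{m}^{k+1}$ in (7). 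Each count is elementary.

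Next I would verify the conditions of Conjecture \ref{combinatorialCrit} case by case by inspecting $\conv(I)\subset \BR^3_{\geq 0}$. The vertices of the distinguished simplex $\Delta$ spanned by $(m_1,0,0),(0,m_2,0),(0,0,m_N)$ are prescribed; one then checks the position of each remaining minimal generator relative to $\Delta$ (which determines whether we are in type I, II or III), the symmetry of $\conv(I)$ with respect to a coordinate plane (which holds in every case because $m_1=m_2$ or $m_2=m_3$), and the maximality of the number of lattice points on the relevant boundary, which reduces to a local lattice-convexity check near each altered generator. These checks are geometric and short.

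The main technical step, and the principal obstacle, is the tangent space computation. The natural approach is to exploit the $\BZ^3$-grading on $\Hom_R(I,R/I)$: for a monomial ideal the weight spaces are indexed by pairs $(g,s)$ where $g$ is a minimal generator and $s$ is a monomial image, subject to compatibility with the minimal syzygies of $I$. Starting from the known value $T(\mathfrak{m}^k)$ in Corollary \ref{cor: 3Dtangentspacefatpoints}, I would compute $T(I)-T(\mathfrak{m}^k)$ as a sum of contributions localized at the altered generators. For (1)--(4) the corrections $0,3,3,6$ are small and can be enumerated directly. For (5)--(7) the corrections $k(k-1)$, $4\binom{k}{2}+6$, $-k(k+5)$ grow with $k$, so several strata of generators must be tracked simultaneously; my plan is to first verify all seven formulas symbolically in Macaulay2 for $k\leq 6$, use that data to organize the weight contributions into uniform families, and then prove each family by a careful syzygy count, possibly via a short exact sequence linking $I$ to an auxiliary monomial ideal (for instance a colon ideal with a pure power) in the spirit of Lemma \ref{lem: tangentspacefatpoints}.
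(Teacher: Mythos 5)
The statement you are trying to prove is itself one of the paper's conjectures: the paper offers no proof of Conjecture \ref{dim3Types}, only Macaulay2-verified instances (the table up to colength $40$ and the pictured examples in Section \ref{sec: examples}), so the only part of your plan that is actually carried out anywhere --- symbolic verification for small $k$ --- merely reproduces the paper's existing evidence. Beyond that, your plan has a structural gap: the maximality assertions in items (1)--(7) are made to rest entirely on Conjecture \ref{combinatorialCrit}, which is open, so even if every other step were completed you would obtain a statement conditional on an unproven conjecture, not a proof. Moreover, the reduction to Conjecture \ref{combinatorialCrit} is not the routine "geometric inspection" you describe: its hypotheses ($c$), ($c^\prime$), ($c^{\prime\prime}$) demand that the relevant boundary of $\conv(I)$ carry the \emph{maximal} number of lattice points among \emph{all} monomial ideals with minimal generators below (or above) the simplex spanned by $m_1,m_2,m_3$. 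That is a global extremal comparison against every competing ideal of the given shape, and it cannot be settled by "a local lattice-convexity check near each altered generator"; as written, that step fails or at least requires an argument you have not supplied.

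The other substantive content of the statement, the closed formulas $T(I)=3\colength(I)+(T(\mathfrak{m}^k)-3\colength(\mathfrak{m}^k))+c_k$ with corrections $0,3,6,k(k-1),4\binom{k}{2}+6,-k(k+5)$ valid for all $k$, is exactly where your proposal stops being a proof: "verify for $k\leq 6$, organize the weight contributions into uniform families, then prove each family by a careful syzygy count" is a restatement of the problem, not a method. Note that Lemma \ref{lem: tangentspacefatpoints} succeeds only because $(F_1,\ldots,F_N)^k$ is pulled back along a flat finite map from a power of the maximal ideal; none of the ideals in (1)--(7) has that structure, and no analogous exact sequence or colon-ideal computation is exhibited or obviously available, so the induction over generator strata that your formulas for (5)--(7) would need is entirely missing. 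There are also small bookkeeping slips symptomatic of the schematic level of the plan (for instance, item (4) adds four standard monomials $z^k,z^{k+1},xz^{k-1},yz^{k-1}$, not three, and $xz^{k-1},yz^{k-1}$ are not on coordinate lines). In short: the concrete portion of your proposal duplicates the paper's computational evidence, while everything that would elevate the conjecture to a theorem --- unconditional maximality, the global lattice-maximality verification, and the general-$k$ tangent space formulas --- remains unaddressed.
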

Finally, the following conjectures cover some types of ideals with the maximum dimension tangent space in $\BC[x,y,z]$, which are not covered by Conjecture \ref{combinatorialCrit}:
\begin{conj} \label{*Conj}
    Let $j\geq 1$ be an integer, and
\begin{align*}
I=&(x^{2j+1},y^{2j+1},z^{2j+4}, \text{all the mixed monomial generators of $\mathfrak{m}^{2j+1}$ with replacing $xz^{2j}$}\\&\text{ and $yz^{2j}$ by $xz^{2j+1}$ and $yz^{2j+1}$})    
\end{align*} be an ideal in $\BC[x,y,z]$. Then $\colength (I)={2j+3 \choose 3}+5$, and it has the maximum dimension tangent space. Furthermore
\begin{align*}
    T(I)=&3\colength(I)+(T(\mathfrak{m}^{2j+1})-3\colength(\mathfrak{m}^{2j+1}))+6.
\end{align*}
\end{conj}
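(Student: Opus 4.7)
The plan is to address the three claims in order: the colength, the explicit formula for $T(I)$, and the maximality of the tangent dimension.

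First, I would establish the colength by a direct bookkeeping of the standard monomial basis of $R/I$. Compared with $\mathfrak{m}^{2j+1}$, the ideal $I$ drops the generators $z^{2j+1}$, $xz^{2j}$, $yz^{2j}$ and acquires the higher-degree generators $z^{2j+4}$, $xz^{2j+1}$, $yz^{2j+1}$. A short divisibility check shows that none of the five monomials $z^{2j+1}, z^{2j+2}, z^{2j+3}, xz^{2j}, yz^{2j}$ is divisible by any remaining generator of $I$, so these five monomials are exactly the ones newly added to the standard basis of $R/\mathfrak{m}^{2j+1}$. This yields $\colength(I) = \binom{2j+3}{3} + 5$.

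For $T(I)$, I would use the $\BZ^3$-graded decomposition of $\Hom_R(I, R/I)$. Since $I$ and $\mathfrak{m}^{2j+1}$ agree away from the $z$-axis, almost every graded piece either matches the corresponding piece for $\mathfrak{m}^{2j+1}$ or differs by a combinatorially local contribution coming from maps involving the three modified generators $z^{2j+4}, xz^{2j+1}, yz^{2j+1}$ and/or the five new standard monomials $z^{2j+1}, z^{2j+2}, z^{2j+3}, xz^{2j}, yz^{2j}$ in the target. Tallying these local contributions against the baseline $T(\mathfrak{m}^{2j+1}) = \binom{2j+2}{2}\binom{2j+3}{2}$ provided by Corollary \ref{cor: 3Dtangentspacefatpoints}, the bookkeeping should consolidate into $T(I) - T(\mathfrak{m}^{2j+1}) = 21$, which equals $3\cdot 5 + 6$ and matches the displayed formula.

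The maximality statement is the genuine obstacle. By upper semicontinuity of tangent dimension and the existence of generic initial ideals for the Borel group action on $\Hilb^n(\BA^3)$, the supremum of $T(-)$ over the Hilbert scheme is realized at a Borel-fixed ideal, so it suffices to show $T(I) \geq T(J)$ for every Borel-fixed $J \subset \BC[x,y,z]$ of colength $\binom{2j+3}{3} + 5$. For each small value of $j$ this is a finite Macaulay2 verification (the case $j=1$ corresponds to $n=15$ and the asserted $T(I) = 81$), but a uniform argument is required for general $j$.

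The hard part will be this uniform bound. My proposed route is to (i) stratify the Borel-fixed ideals of colength $\binom{2j+3}{3}+5$ by the shape of their Newton staircase, (ii) express the deviation of each such $J$ from $\mathfrak{m}^{2j+1}$ as a sequence of five elementary Borel-compatible additions to the standard basis, and (iii) prove that each such addition is tangent-increasing and that the net gain is maximized when the five additions are concentrated as in $I$ — three extensions up the $z$-axis and two at the base of the $z$-tower. This last monotonicity step is close in spirit to the convex-hull philosophy motivating Conjecture \ref{combinatorialCrit}, even though the present colength is explicitly flagged as not covered by it, and I would hope the same philosophy — that maximally concentrating the staircase along one coordinate axis is tangent-maximizing — supplies the missing combinatorial inequality. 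Making this monotonicity argument precise, rather than the explicit computation of $T(I)$ itself, is where the real difficulty lies.
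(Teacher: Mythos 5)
There is nothing in the paper to compare your argument against: the statement you are addressing is Conjecture \ref{*Conj}, which the paper does not prove. It is supported only by Macaulay2 evidence (the $n=15$ and $n=40$ rows of the table in Section \ref{ex3D}) and by the closed formula being consistent with Corollary \ref{cor: 3Dtangentspacefatpoints}. So the question is whether your proposal would constitute a proof, and it would not. The colength computation is fine, and the $T(I)$ formula is plausibly provable by the multigraded decomposition of $\Hom_R(I,R/I)$ you describe, but you have not actually carried out the bookkeeping showing that the excess over $T(\mathfrak{m}^{2j+1})$ is $21$ independently of $j$; the ``local contribution'' claim needs an explicit tally over the multidegrees affected by the three modified generators and the five new standard monomials, and as written it is an expectation, not an argument.

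The genuine gap is the maximality statement, and your proposed route has a concrete flaw beyond being incomplete. The reduction to Borel-fixed ideals via generic initial ideals and upper semicontinuity is correct, but your step (ii) --- writing every Borel-fixed $J$ of colength ${2j+3 \choose 3}+5$ as $\mathfrak{m}^{2j+1}$ together with five elementary additions to its standard basis --- only applies to ideals $J\subseteq \mathfrak{m}^{2j+1}$, i.e.\ those containing no monomial of degree $\leq 2j$. A Borel-fixed ideal of this colength need not be of that form (for instance $(x,y,z^{n})$ with $n={2j+3 \choose 3}+5$ is Borel-fixed of the right colength), so your stratification is not exhaustive; restricting to such $J$ would amount to assuming the (also conjectural) necessary condition of Conjecture \ref{necessaryCondition}. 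Finally, the monotonicity claim in step (iii) --- that concentrating the five additions along the $z$-axis as in $I$ maximizes the tangent dimension --- is precisely the open combinatorial content of the paper's conjectural framework (and this colength class is explicitly outside Conjecture \ref{combinatorialCrit}); no mechanism is offered for proving it uniformly in $j$, so the heart of the conjecture remains untouched.
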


\begin{conj} \label{**Conj}
    Let $k\geq 3$ be an integer, and
  \begin{align*}
 I=&(x^k,y^{k+1},z^{k+2}, \text{all the mixed monomial generators of $\mathfrak{m}^{k}$ with replacing $yz^{k-1}$ by $xz^{k}$,}\\&\text{and $\{y^iz^{j}\}_{i+j=k}$ by $\{y^{i'}z^{j'}\}_{i'+j'=k+1}$}) \end{align*} be an ideal in $\BC[x,y,z]$. Then $\colength (I)={k+2 \choose 3}+k+3$, and it has the maximum dimension tangent space. Furthermore
\begin{align*}
    T(I)=&3\colength(I)+(T(\mathfrak{m}^{k})-3\colength(\mathfrak{m}^{k}))+{k+2 \choose 2}+{k-2 \choose 2}\\=&\left({k+1 \choose 2}+1\right)\left({k+2 \choose 2}+1\right)+11.
\end{align*}
\end{conj}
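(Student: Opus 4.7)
The plan is to verify the three assertions of Conjecture \ref{**Conj} in order: (i) the colength formula, (ii) the tangent-space dimension formula, and (iii) the maximality claim. For (i), I would first carefully parse the substitution rule defining the generating set of $I$, and then enumerate the standard monomials (those in $R\setminus I$) starting from those of $\mathfrak{m}^k$: removing generators of $\mathfrak{m}^k$ enlarges the standard set, while introducing the higher-degree generators $y^{k+1}$, $z^{k+2}$, $xz^k$, and the new $y^{i'}z^{j'}$ with $i'+j'=k+1$ keeps higher monomials in $I$ but releases to the standard set the lower-degree monomials they replaced. Careful combinatorial bookkeeping, organized by the $y$-$z$ column and the $x$-degree, should produce the count $\binom{k+2}{3}+k+3$.

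For (ii), since $I$ is a Borel-fixed monomial ideal, $\Hom_R(I,R/I)$ is naturally multigraded, and I would compute it by counting, for each minimal generator $g$ of $I$, the multidegrees $\alpha$ such that the map $g\mapsto x^{\alpha}$ (zero on the other generators) extends to an $R$-linear morphism; equivalently $x^{\alpha}$ must be standard and every syzygy relation between $g$ and another generator must send $x^{\alpha}$ into $I$. This reduces $T(I)$ to a finite, explicit enumeration. As a consistency check, I would verify algebraically that the two stated formulas for $T(I)$ agree, using Corollary \ref{cor: 3Dtangentspacefatpoints} to substitute $T(\mathfrak{m}^k)=\binom{k+1}{2}\binom{k+2}{2}$; after cancellation this reduces to the identity $3k+\binom{k-2}{2}=\binom{k+1}{2}+3$, which one checks holds for all $k\geq 3$ by expanding both sides.

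The main obstacle is (iii). This ideal is explicitly stated to lie outside the conjectural convex-geometric framework of Conjecture \ref{combinatorialCrit}, so the proof cannot proceed via the boundary-shape criterion. The natural strategy is: first, reduce maximality over $\Hilb^n(\BA^3)$ to the locus of Borel-fixed ideals via Gr\"obner degeneration with respect to a generic weight; this reduction is itself delicate, since the tangent-space maximum can jump on special strata, as illustrated by the Sturmfels and Ramkumar--Sammartano examples. Second, assuming Conjecture \ref{necessaryCondition}, restrict attention to Borel-fixed ideals with $m_1=k$ at colength $\binom{k+2}{3}+k+3$. Third, enumerate the finite set of such ideals (using for instance \cite[Theorem 15.23]{Eisenbudcomm}-type constraints to bound the degrees of minimal generators) and verify $T(I')\leq T(I)$ case by case, contrasting the enumeration with the explicit value from (ii).

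The hardest step is the first, and beyond that, establishing a systematic upper bound on $T(I')$ for ideals outside the scope of Conjecture \ref{combinatorialCrit}. Lemma \ref{lem: tangentspacefatpoints} applied to complete intersections $(F_1,F_2,F_3)^{k'}$ provides rigorous comparison values in certain regimes, but these do not directly control the ``exceptional'' shape under consideration. A complete proof would likely require either first settling Conjecture \ref{necessaryCondition} or isolating a new invariant -- plausibly a weighted count of lattice points on $\underline{\partial}\conv(I)$ and $\overline{\partial}\conv(I)$ together with a correction term for interior lattice points -- that captures maximality beyond the convex types of Conjecture \ref{combinatorialCrit}. This is precisely why the statement is presented as a conjecture rather than a theorem.
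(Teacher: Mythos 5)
There is nothing in the paper to compare your proposal against: the statement you were asked about is Conjecture \ref{**Conj}, and the paper offers no proof of it. Its only support is computational evidence via Macaulay2 for small cases, namely the colength $16$ entry ($k=3$) in the table of Section \ref{ex3D} and Example \ref{**example} ($k=4$, colength $27$, $T(H)=187$). Your proposal correctly recognizes this status, and its honest conclusion that the maximality claim cannot currently be proved is exactly right. Your consistency check between the two closed forms for $T(I)$ is also correct: substituting $T(\mathfrak{m}^k)=\binom{k+1}{2}\binom{k+2}{2}$ and $\colength(\mathfrak{m}^k)=\binom{k+2}{3}$ does reduce the equality of the two displayed expressions to $3k+\binom{k-2}{2}=\binom{k+1}{2}+3$, which holds identically; this is a genuine (if small) addition beyond what the paper records. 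Your strategies for the colength count and for the multigraded computation of $\Hom_R(I,R/I)$ are standard and plausible, though for the conjecture as stated they would have to be carried out uniformly in $k$, not just for the checked examples.

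Two points in your plan for the maximality claim deserve correction. First, the reduction to Borel-fixed ideals is not the delicate step you make it out to be: tangent space dimension is upper semicontinuous on the Hilbert scheme, and Gr\"obner degeneration to the generic initial ideal gives $T(\mathrm{gin}(J))\geq T(J)$ for any $J$, so the maximum of $T$ on $\Hilb^n(\BA^3)$ is always attained at a Borel-fixed ideal; the Sturmfels and Ramkumar--Sammartano examples do not obstruct this. Second, and more seriously, your step three (``enumerate the finite set of such ideals and verify $T(I')\leq T(I)$ case by case'') cannot establish the statement for all $k\geq 3$: the set of Borel-fixed ideals of colength $\binom{k+2}{3}+k+3$ with $m_1=k$ is finite for each fixed $k$ but grows with $k$, so a case-by-case check yields at best finitely many instances, which is no more than the paper's own Macaulay2 evidence. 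A genuine proof would need a uniform upper bound on $T$ over all competitors, and neither your proposal nor the paper supplies one --- which is precisely why the statement remains a conjecture.
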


\section{Examples} \label{sec: examples} In Section \ref{ex3D}, we present plenty of supportive examples for sufficient conditions in three dimensions. In Section \ref{sec: sufficient not necessary} and Section \ref{sec: necessary not sufficient}, we give examples to emphasize that in general, conjectural sufficient and necessary conditions cannot play the role of each other.
\subsection{Conjectural sufficient condition in three dimensions} \label{ex3D}
We present some specific examples to see how Conjecture \ref{dim3Types} (or Conjecture \ref{*Conj} or Conjecture \ref{**Conj}) works. We have the following table for up to colength 40. By \textit{there is not such an example} in the table, we mean that there is no example with maximum dimension tangent space, satisfying  Conjectures \ref{combinatorialCrit}, \ref{*Conj}, or \ref{**Conj}. 

\vspace{0.5cm}
\begin{tabular}{ |p{0.54cm}||p{13.58cm}|} 
 \hline

 n&An example in $\Hilb^{n}(\BA^3)$  with the maximum dimension tangent space, for which conditions in Conjecture \ref{combinatorialCrit} (*or Conjecture \ref{*Conj}, **or Conjecture \ref{**Conj}) hold\\
 \hline
 1   & $(x,y,z)$   \\
 2&   $(x,y,z^2)$ \\
 3 &$(x,y^2,z^2,yz)$ \\
 4    &$(x,y,z)^2$ \\
 5&   $(x^2,y^2,z^3,xy,yz,xz)$  \\
 6& $(x^2,y^2,z^4,xy,yz,xz)$  \\
 7& $(x^2,y^3,z^3,xz,yz,yz^2,y^2z)$ \\
 8& $(x^2,y^2,z^4,xy,yz^2,xz^2)$\\
  9& $(x^2,y^3,z^3,yz^2,xz^2,y^2z,xy^2,xyz)$\\

10& $(x,y,z)^3$\\ 11& $(x^3,y^3,z^4,yz^2,xz^2,y^2z,xyz,x^2z,xy^2,x^2y)$\\ 12& $(x^3,y^3,z^5,yz^2,xz^2,y^2z,xyz,x^2z,xy^2,x^2y)$\\ 13&$(x^3,y^3,z^4,y^2z,xyz,x^2z,xy^2,x^2y,xz^3,yz^3)$\\ 
14& $(x^3,y^3,z^5,y^2z,xyz,x^2z,xy^2,x^2y,yz^3,xz^3)$, and \\&$(x^3,y^4,z^4,xz^2,xyz,x^2z,xy^2,x^2y,yz^3,y^2z^2,y^3z)$\\   15*& *$(x^3,y^3,z^6,y^2z,xyz,x^2,xy^2,x^2y,yz^3,xz^3)$\\   16**& **$(x^3,y^4,z^5,xyz,x^2z,xy^2,x^2y,yz^3,xz^3,y^2z^2,y^3z)$\\   
 
   17& $(x^3,y^4,z^4,x^2z,x^2y,yz^3,xz^3,y^2z^2,xyz^2,y^3z,xy^2z,xy^3)$\\ 
   18& there is not such an example\\  
     19& $(x^3,y^4,z^4,yz^3,xz^3,y^2z^2,xyz^2,x^2z^2,y^3z,xy^2z,x^2yz,xy^3,x^2y^2)$\\   20& $(x,y,z)^4$\\
   21& $(x^4,y^4,z^5,yz^3,xz^3,y^2z^2,xyz^2,x^2z^2,y^3z,xy^2z,x^2yz,x^3z,xy^3,x^2y^2,x^3y)$\\
  22& $(x^4,y^4,z^6,yz^3,xz^3,y^2z^2,xyz^2,x^2z^2,y^3z,xy^2z,x^2yz,x^3z,xy^3,x^2y^2,x^3y)$\\
   
23& $(x^4,y^4,z^5,y^2z^2,xyz^2,x^2z^2,y^3z,xy^2z,x^2yz,x^3z,xy^3,x^2y^2,x^3y,yz^4,xz^4)$\\
   
24& $(x^4,y^4,z^6,y^2z^2,xyz^2,x^2z^2,y^3z,xy^2z,x^2yz,x^3z,xy^3,x^2y^2,x^3y,yz^4,xz^4)$\\
   25& $(x^4,y^5,z^5,xz^3,xyz^2,x^2z^2,xy^2z,x^2yz,x^3z,xy^3,x^2y^2,x^3y,yz^4,y^2z^3,y^3z^2,y^4z)$\\
    26&there is not such an example\\ 27**&**$(x^4,y^5,z^6,yz^4,y^2z^3,y^3z^2,y^4z,xz^4,x^2z^2,x^3z,xy^3,y^3x,x^2y^2,xyz^2,xy^2z,x^2yz)$\\
   28&there is not such an example\\29&$(x^4,y^5,z^5,x^2z^2,x^2yz,x^3z,x^2y^2,x^3y,yz^4,xz^4,y^2z^3,xyz^3,y^3z^2,xy^2z^2,y^4z,$\\
 &$xy^3z,xy^4)$\\
   30&there is not such an example\\31&there is not such an example\\
  32&there is not such an example\\
33&there is not such an example\\34&$(x^4,y^5,z^5,yz^4,y^2z^3,y^3z^2,y^4z,x^3yz,x^2yz^2,x^2y^2z,xy^2z^2,xyz^3,xy^3z,x^2z^3,$\\&$xz^4,x^2y^3,xy^4,x^3z^2,x^3y^2)$\\35&$(x,y,z)^5$\\ 

 \end{tabular}

   \begin{tabular}{ |p{0.54cm}||p{13.58cm}|} 
   
36&$(x^5,y^5,z^6,yz^4,y^2z^3,y^3z^2,y^4z,x^3yz,x^2yz^2,x^2y^2z,xy^2z^2,xyz^3,xy^3z,x^2z^3,$\\&$xz^4,x^2y^3,xy^4,x^3z^2,x^3y^2,x^4y,x^4z)$\\
           37&$(x^5,y^5,z^7,yz^4,y^2z^3,y^3z^2,y^4z,x^3yz,x^2yz^2,x^2y^2z,xy^2z^2,xyz^3,xy^3z,x^2z^3,$\\&$xz^4,x^2y^3,xy^4,x^3z^2,x^3y^2,x^4y,x^4z)$\\38&$(x^5,y^5,z^6,xy^4,x^2y^3,x^3y^2,x^4y,xz^5,yz^5,x^2z^3,y^2z^3,xyz^3,x^3z^2,y^3z^2,xy^2z^2,$\\&$x^2yz^2,x^4z,y^4z,x^2y^2z,xy^3z,x^3yz)$\\ 
  39&$(x^5,y^5,z^7,xz^5,yz^5,x^4z,y^4z,x^4y,xy^4,x^3y^2,x^2y^3,x^3z^2,x^2z^3,y^3z^2,y^2z^3,$\\&$x^3yz,xy^3z,xyz^3,x^2y^2z,x^2yz^2,xy^2z^2)$\\ 40*&*$(x^5,y^5,z^8,y^2z^3,xyz^3,x^2z^3,y^3z^2,xy^2z^2,x^2yz^2,x^3z^2,y^4z,xy^3z,x^2y^2z,x^3yx,$\\&$x^4z,xy^4,x^2y^3,x^3y^2,x^4y,yz^5,xz^5)$\\
   \hline

\end{tabular}
\begin{remark}
    We believe that we are able to fill the gaps in the table, by giving other closed formulas similar to those in Conjecture \ref{*Conj} or Conjecture \ref{**Conj} for the types of ideals that happen there (but this may need further checks of higher degree examples with Macaulay2, which may not be practical for large degrees); however, we leave the table as it is to emphasize that it seems to be impossible to formulate all the possible types of ideals in $3$ variables with the maximum dimension tangent space, since new shapes will show up every time one passes $\mathfrak{m}^k$, for each $k$.
\end{remark}

Now, we visualize some of the examples in three dimensions in the table above, to see how Conjecture \ref{combinatorialCrit} or Conjecture \ref{dim3Types}  (or Conjecture \ref{*Conj}) works. All computations have been done via Macaulay2. 

Examples \ref{S} and \ref{RS} below are the cases when $k=2$ and $k=5$, respectively, in Conjecture \ref{4} part (4):

\begin{example}[of type I($a$)(ii) in Conjecture \ref{combinatorialCrit}]\label{S}
    In \cite[Section 2]{Sturmfels}, Sturmfels showed that the following ideal

    \begin{align*}
        I=(x^2,y^2,z^4,xy,xz^2,yz^2)
    \end{align*}
    has the maximum dimension tangent space as an element in $\Hilb^{{8}}(\BA^3)$. One can check that Conjecture \ref{combinatorialCrit} holds in this case.
    
    The convex hull of $I$ is pictured below. 

    \begin{figure}[H]
    \centering
    \includegraphics[width=5cm]{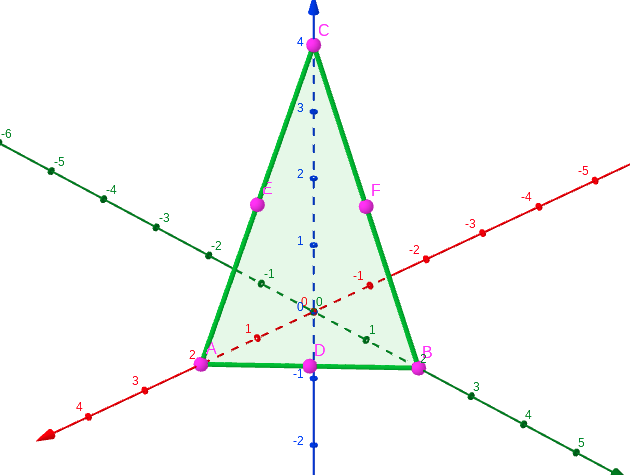}
    \caption{The lower boundary=the upper boundary of the convex hull of $I=(x^2,y^2,z^4,xy,xz^2,yz^2)$}
\end{figure}
\end{example}

\begin{example}[of type I($a$)(ii) in Conjecture \ref{combinatorialCrit}]\label{RS}
 Le us consider the following example in $\BC[x,y,z]$, which has been proved to have the maximum dimension tangent space in \cite[Proposition 5.6]{Ramkumar-Sammartano}:
    \begin{align*}
        J=&(x^5,y^5,z^7,xz^5,yz^5,x^4z,y^4z,x^4y,xy^4,x^3y^2,x^2y^3,x^3z^2,\\&x^2z^3,y^3z^2,y^2z^3,x^3yz,xy^3z,xyz^3,x^2y^2z,x^2yz^2,xy^2z^2).
    \end{align*}
    One can show that all the conditions in Conjecture \ref{combinatorialCrit} holds (just need to consider the following picture which shows the lower boundary of the convex hull).
       \begin{figure}[H]
    \centering
    \includegraphics[width=5cm]{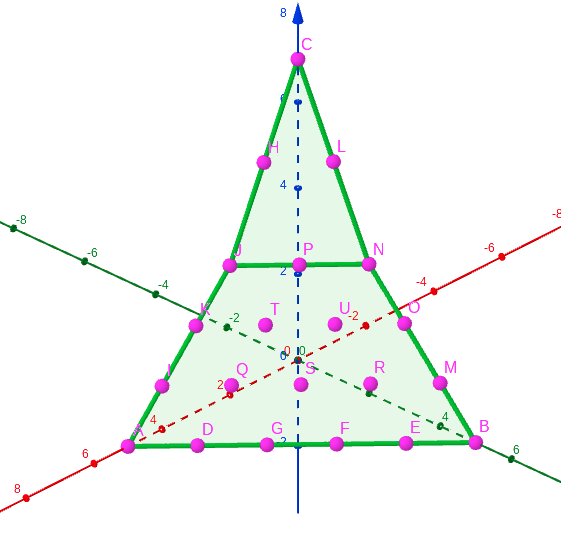}
    \caption{The lower boundary of $\conv(J)$}
\end{figure}
\end{example}

The following example is the case when $k=2$ , in Conjecture \ref{k+1} part (5):

\begin{example}

Let $L=(x^2,y^3,z^3,xy,xz,yz^2,y^2z)$. Note that if $k=2$ in Conjecture \ref{k+1} part (5), then we have $\colength(L)={k+2 \choose 3}+(k+1)=7$. One can check that the conditions in Conjecture \ref{combinatorialCrit} (of type I($a$)(iii)) satisfy. Also, via Macaulay2, we can check that this ideal has the maximum dimension tangent space, and
\begin{align*}
T(L)=&3\times 7+(18-3\times 4)+2\times1=29.
\end{align*}

The lower boundary of the convex hull is pictured in the following picture:
\begin{figure}[H]
    \centering
    \includegraphics[width=7cm]{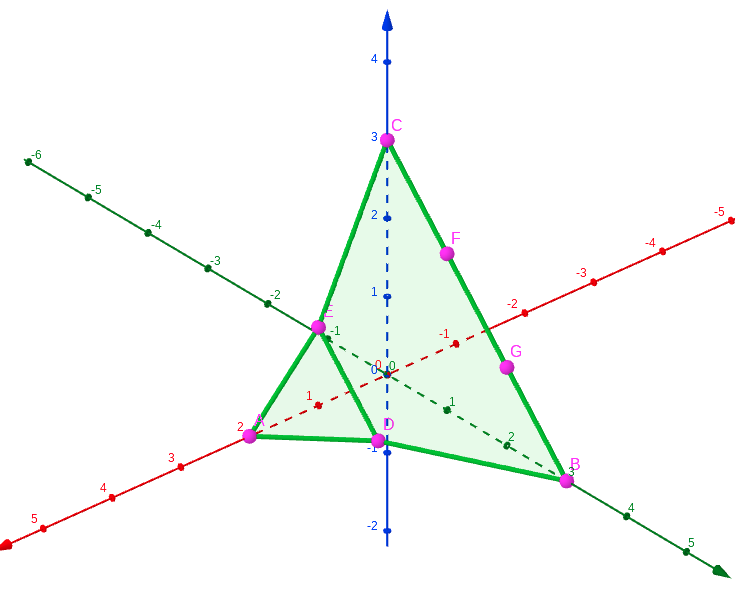}
    \caption{The lower boundary of the convex hull of the ideal $L=(x^2,y^3,z^3,xy,xz,yz^2,y^2z)$}
\end{figure}

\end{example}

The following example Is the case when $k=4$ , in Conjecture \ref{1} part (1):

\begin{example}
Let
$$U=(x^4,y^4,z^5,yz^3,xz^3,y^2z^2,xyz^2,x^2z^2,y^3z,xy^2z,x^2yz,x^3z,xy^3,x^2y^2,x^3y)$$ be an ideal in $\BC[x,y,z]$. We have the following picture for the lower boundary of the convex hull, and one can check that the conditions of Conjecture \ref{combinatorialCrit} (of type I($a$)(ii)) hold. Also, using Macaulay2, we can show that $U$ has the maximum dimension tangent space, and $T(U)=3\colength(U)+(T(\mathfrak{m}^4)-3\colength(\mathfrak{m}^4))=3\times 21+90=153$.
      \begin{figure}[H]
    \centering
    \includegraphics[width=6cm]{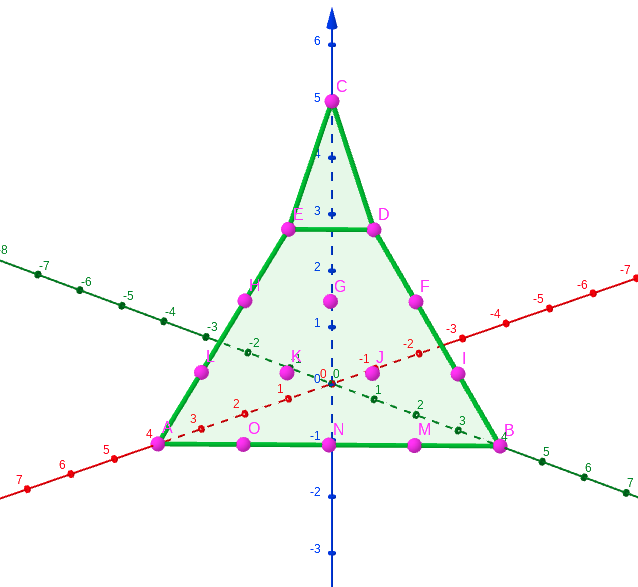}
    \caption{The lower boundary of the convex hull of the ideal $U=(x^4,y^4,z^5,yz^3,xz^3,y^2z^2,xyz^2,x^2z^2,y^3z,xy^2z,x^2yz,x^3z,xy^3,x^2y^2,x^3y)$}
\end{figure}
\end{example}\label{22}
The following example is the case when $k=3$ , in Conjecture \ref{2} part (2):

\begin{example} \label{12Symmetric} Let
\begin{align*}
    F=(x^3,y^3,z^5,yz^2,xz^2,y^2z,xyz,x^2z,xy^2,x^2y)
\end{align*}
     be an ideal in $\BC[x,y,z]$. The lower boundary of the convex hull is shown in the following picture, and one can check that the conditions of Conjecture \ref{combinatorialCrit} (of type I(a)(ii)) hold. Then, by Macaulay2, we can show that $F$ has the maximum dimension tangent space, and $T(F)=3\colength(F)+(T(\mathfrak{m}^3)-3\colength(\mathfrak{m}^3))=3\times 12+30=66$.
       \begin{figure}[H]
    \centering
    \includegraphics[width=6cm]{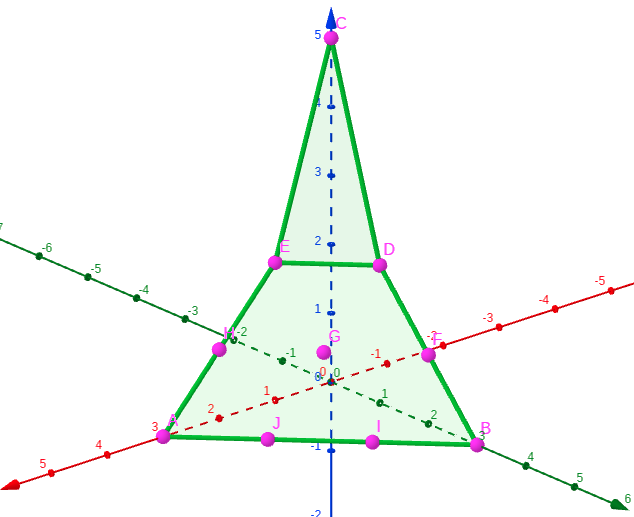}
    \caption{The lower boundary of $\conv(F)$ }
    \end{figure}
\end{example}
The following example Is the case when $k=4$ , in Conjecture \ref{1} part (7):
\begin{example}[of type II in Conjecture \ref{combinatorialCrit}]\label{34} Let
\begin{align*}
    V=&(x^4,y^5,z^5,yz^4,y^2z^3,y^3z^2,y^4z,x^3yz,x^2yz^2,x^2y^2z,xy^2z^2,xyz^3,xy^3z,x^2z^3,xz^4,\\&x^2y^3,xy^4,x^3z^2,x^3y^2)
\end{align*}
be an ideal in $\BC[x,y,z]$. We have $\colength (V)=34$, and it has the maximum dimension tangent space, and $T(V)=276$. The convex hull of $V$ shows that this ideal satisfies type II conditions of Conjecture \ref{combinatorialCrit}:
     \begin{figure}[H]
    \centering
    \includegraphics[width=6cm]{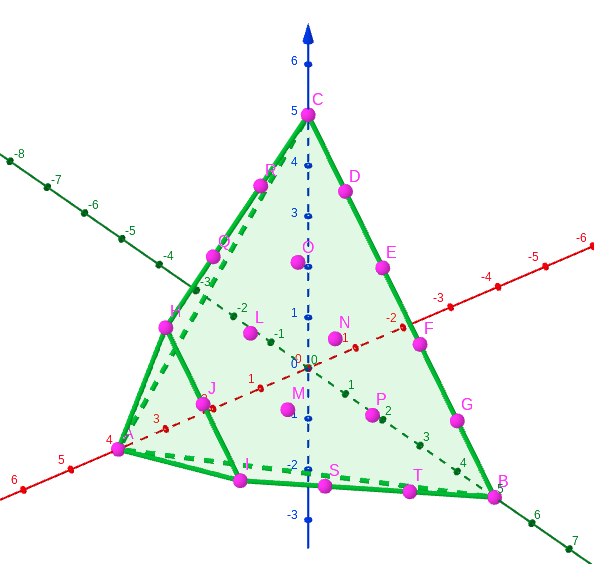}
    \caption{$\conv(V)$}
    \label{V}
    \end{figure}
\end{example}

The following example is the case when $k=5$ , in Conjecture \ref{1} part (3):

\begin{example}[of type III ($a^{\prime\prime}$)(i) in Conjecture \ref{combinatorialCrit}] \label{38} Let
\begin{align*}
    W=&(x^5,y^5,z^6,xy^4,x^2y^3,x^3y^2,x^4y,xz^5,yz^5,x^2z^3,y^2z^3,xyz^3,x^3z^2,y^3z^2,xy^2z^2,x^2yz^2,\\&x^4z,y^4z,x^2y^2z,xy^3z,x^3yz)
\end{align*}
be an ideal in $\BC[x,y,z]$. One can check that $\colength (W)=38$, and it has the maximum dimension tangent space, and $T(W)=324$. The convex hull of $W$ (pictured below) shows that this ideal satisfies type III conditions of Conjecture \ref{combinatorialCrit}:
     \begin{figure}[H]
    \centering
    \includegraphics[width=5cm]{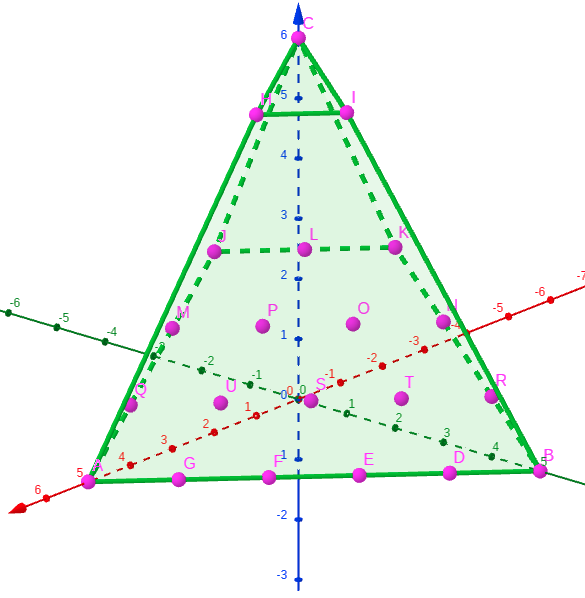}
    \caption{$\conv(W)$}
    \label{W}
    \end{figure}
\end{example}

The following example Is the case when $k=4$ , in Conjecture \ref{1} part (6):

\begin{example}[of type III ($a^{\prime\prime}$)(ii) in Conjecture \ref{combinatorialCrit}]\label{29} Let
\begin{align*}
    O=&(x^4,y^5,z^5,x^2z^2,x^2yz,x^3z,x^2y^2,x^3y,yz^4,xz^4,y^2z^3,xyz^3,y^3z^2,xy^2z^2,y^4z,xy^3z,xy^4)
\end{align*}
be an ideal in $\BC[x,y,z]$. We have $\colength (O)=29$, and using Macaulay2, we can see that it has the maximum dimension tangent space $T(O)=207$. The convex hull below shows that type III conditions of Conjecture \ref{combinatorialCrit} hold for this ideal.
     \begin{figure}[H]
    \centering
    \includegraphics[width=5cm]{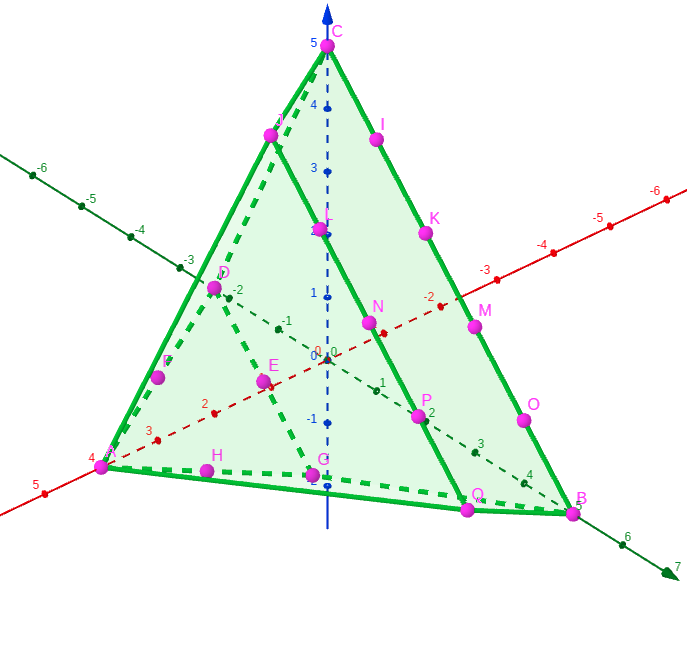}
    \caption{$\conv(O)$}
    \label{O}
    \end{figure}
\end{example}

\begin{example}[of type in Conjecture \ref{*Conj}]Let
\begin{align*}
G=&(x^5,y^5,z^8,y^2z^3,xyz^3,x^2z^3,y^3z^2,xy^2z^2,x^2yz^2,x^3z^2,y^4z,xy^3z,x^2y^2z,x^3yx,\\&x^4z,xy^4,x^2y^3,x^3y^2,x^4y,yz^5,xz^5)\end{align*}
be an ideal in $\BC[x,y,z]$. Then $\colength (G)=40$, and it has the maximum dimension tangent space, and $T(G)=336=3\times 40+(T(\mathfrak{m}^5)-3\colength(\mathfrak{m}^5))+6$. 
    \begin{figure}[H]
    \centering
    \includegraphics[width=5cm]{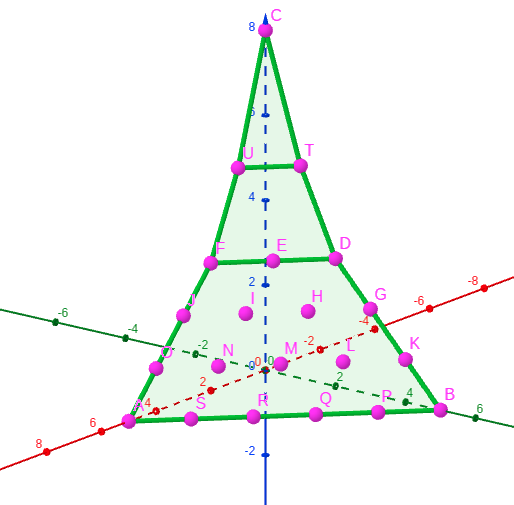}
    \caption{The lower boundary of $\conv(G)$}
    \label{G}
\end{figure}
\end{example}

\begin{example}[of type in Conjecture \ref{**Conj}]\label{**example}
    Let
\begin{align*}
H=&(x^4,y^5,z^6,yz^4,y^2z^3,y^3z^2,y^4z,xz^4,x^2z^2,x^3z,xy^3,y^3x,x^2y^2,xyz^2,xy^2z,x^2yz)\end{align*}
be an ideal in $\BC[x,y,z]$. Then one can check that $\colength (H)=27$, and it has the maximum dimension tangent space, and $T(H)=187=3\times 27+(T(\mathfrak{m}^4)-3\colength(\mathfrak{m}^4))+{4+2\choose 2}+{4-2 \choose 2}$. 
    \begin{figure}[H]
    \centering
    \includegraphics[width=5cm]{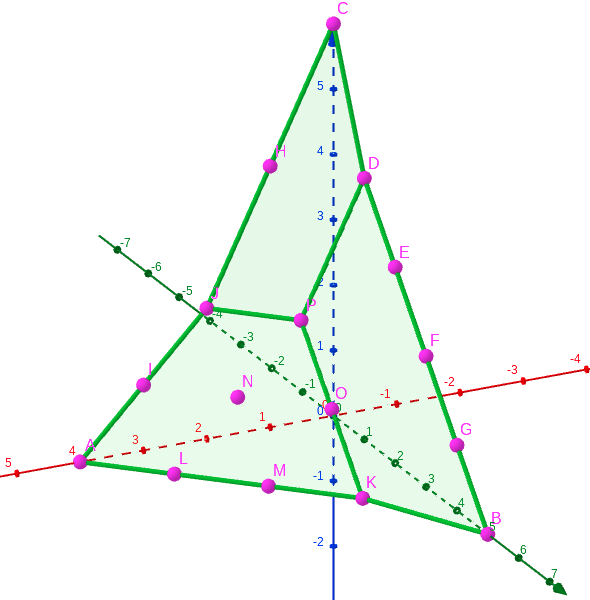}
    \caption{The lower boundary of $\conv(H)$}
    \label{T}
\end{figure}
\end{example}
\subsection{Conjecture \ref{combinatorialCrit} does not give a necessary condition}\label{sec: sufficient not necessary}
 Note that Conjecture \ref{combinatorialCrit} does not provide a necessary condition to have maximum dimension tangent space; apart from examples of types in conjectures \ref{*Conj} and \ref{**Conj} as above and in the table at the beginning of this section, we also present one more example:
    \begin{example}[non-example]
        Let 
\begin{align*}
   M=(x^3,y^3,z^4,xz^2,y^2z,xyz,x^2z,xy^2,x^2y,yz^3)
\end{align*}
be an ideal in $\BC[x,y,z]$. Then one can check by Macaulay2 that $\colength(M)=12$ and it has the maximum dimension tangent space, and $T(M)=66$. However, as it can be seen from the convex hull in the following picture, conditions of Conjecture \ref{combinatorialCrit} do not hold in this case.
         \begin{figure}[H]
    \centering
    \includegraphics[width=5cm]{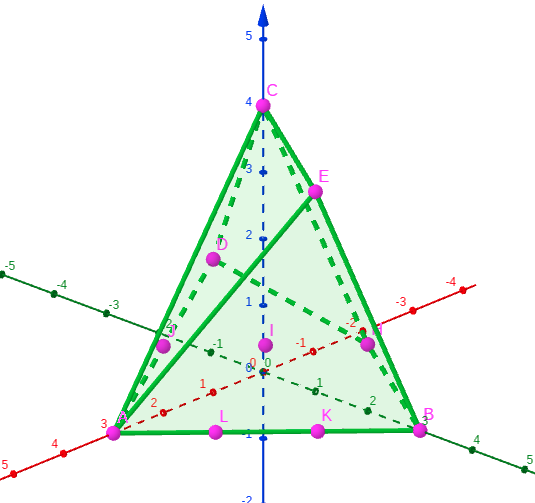}
    \caption{The convex hull of the ideal $M=(x^3,y^3,z^4,xz^2,y^2z,xyz,x^2z,xy^2,x^2y,yz^3)$}
    \label{}
\end{figure}
Note that Example \ref{12Symmetric} presents another example of colength $12$ satisfying conditions of Conjecture \ref{combinatorialCrit}.
    \end{example}

\subsection{Conjecture \ref{necessaryCondition} does not give a sufficient condition in general case} \label{sec: necessary not sufficient} We emphasize that Conjecture \ref{necessaryCondition} does not provide a sufficient condition in general to have maximum dimension tangent space (recall that for certain colengths, Lemma \ref{necessaryConditionMaximal} states that this is a sufficient condition too):

\begin{example}
     Let 
\begin{align*}
   N=(x^3,y^3,z^4,xy^2,x^2y,yz^3,y^2z^2,xyz^2,x^2z)
\end{align*}
be an ideal in $\BC[x,y,z]$. Then one can check by Macaulay2 that $\colength(N)=16$ and it has the tangent space of dimension $T(N)=78$. Although Conjecture \ref{necessaryCondition} holds for this example, $N$ does not have the maximum dimension tangent space (the maximum dimension of the tangent space for colength $16$ is $88$ which is attained by the example for $n=16$ in the table at the beginning of this section).
\end{example}

\bibliographystyle{amsplain-nodash}

\bibliography{bib}

\vspace{0.25cm}

\end{document}